\newtheorem{theorem}{Theorem}[section]
\newtheorem{lemma}[theorem]{Lemma}
\newtheorem{proposition}[theorem]{Proposition}
\newtheorem{corollary}[theorem]{Corollary}
\theoremstyle{definition}
\newtheorem{definition}[theorem]{Definition}
\newtheorem{example}[theorem]{Example}
\theoremstyle{remark}
\newtheorem{remark}[theorem]{Remark}
\def\Fq{{\mathbb F}_q}
\def\I{{\mathcal I}}
\def\AA{{\mathbb A}}
\def\FF{{\mathbb F}}
\def\PP{{\mathbb P}}
\def\M{\mathcal{M}}
\def\N{{\mathbb N}}
\def\J{\mathcal{J}}
\def\Q{{\mathbb Q}}
\def\ZZ{{\mathbb Z}}
\def\hp3{\widehat{\mathbb P}^3}
\def\Aff{{\mathbb A}}
\def\Z{\mathbb{Z}}
\def\lm{\mathsf{lm}}
\newcommand{\Zs}{{\mathscr{Z}}}
\newcommand{\p}{\mathit{p}}
\newcommand{\Mon}{\mathbb{M}}
\newcommand{\Monb}{\overline{\mathbb{M}}}
\newcommand{\LT}{\mathrm{LT}}
\newcommand{\supp}{\mathrm{supp}}
\def\N{{\mathbb N}}
\def\ZZ{{\mathbb Z}}
\def\X{\mathscr{X}}
\def\Gs{\mathrm{\Gamma}_q}
\begin{document}
\title[Projective Footprint Bound]{Vanishing ideals of projective spaces over finite fields and a projective footprint bound}
\author{Peter Beelen}
\address{Department of Applied Mathematics and Computer Science, \newline \indent
Technical University of Denmark, DK 2800, Kgs. Lyngby, Denmark}
\email{pabe@dtu.dk}

\author{Mrinmoy Datta}
\address{Department of Applied Mathematics and Computer Science, \newline \indent
Technical University of Denmark, DK 2800, Kgs. Lyngby, Denmark}
\curraddr{Department of Mathematics and Statistics,  
UiT - The Arctic University of Norway, N-9037, Troms\o, Norway}
\email{mrinmoy.dat@gmail.com}

\author{Sudhir R. Ghorpade}
\address{Department of Mathematics,
Indian Institute of Technology Bombay,\newline \indent
Powai, Mumbai 400076, India.}
\email{srg@math.iitb.ac.in}

\subjclass[2010]{14G15, 13P10, 11T06, 11G25, 14G05}

\date{}

\begin{abstract}
We consider the vanishing ideal of a projective space over a finite field. An explicit set of generators for this ideal has been given by Mercier and Rolland. 
We show that these generators form 
a universal Gr\"obner basis of the ideal. Further we give a projective analogue of the 
footprint bound, and a version of it that is suitable for estimating the number of rational points of projective algebraic varieties over finite fields. 
An application to Serre's inequality for the number of rational points of projective hypersurfaces over finite fields is included. \end{abstract} 

\keywords{Finite field, projective space, algebraic variety, vanishing ideal, Gr\"obner basis, footprint bound, projective hypersurface}        


\maketitle

\section{Introduction}
Let $\Fq$ be the finite field with $q$ elements and let $\overline{\FF}_q$ denote an algebraic closure of $\Fq$. We are primarily interested in the problem of determining or estimating the number of $\Fq$-rational points of an affine or projective variety $\X$ defined over $\Fq$. When $\X$ is known to be irreducible (over $\overline{\FF}_q$) and better still, nonsingular, then there are good estimates that arise from deeper methods in algebraic geometry, including the known validity of the Weil conjectures and related results such as the Grothendieck-Lefschetz trace formula. These estimates typically involve topological data such as the genus (when $\X$ is a smooth projective curve) or more generally, the $\ell$-adic Betti numbers. The estimates are particularly good when $\X$ is a complete intersection.  Simplest among such general estimates is the Lang-Weil inequality, which still requires $\X$ to be (absolutely) irreducible with a given embedding in a projective space, and the knowledge of the dimension and degree of $\X$. One may refer to \cite{GL} for a brief survey of these aspects. 

What if $\X$ is not necessarily irreducible? A reducible $d$-dimensional  algebraic variety can have many more $\Fq$-rational points than any irreducible $d$-dimensional  algebraic variety, and the geometric estimates such as those mentioned above are not immediately applicable. A recent result of Couvreur \cite{C} does provide a way out, but still it requires the knowledge of dimensions as well as degrees of the irreducible components. A more elementary approach that relies on simpler attributes of varieties such as the number of defining equations and their degrees may be desirable for practical applications. The simplest among such results is the bound due to Ore (1933) for the number of $\Fq$-rational points of affine hypersurfaces of degree $d$ defined over $\Fq$.
It states that if 
$\Zs = Z(f)$  is 
the zero-set in $\AA^m(\overline{\FF}_q)$ of a nonzero polynomial 
$f\in \Fq[x_1, \dots , x_m]$ of degree $\le d$,~then 
$$
|\Zs(\Fq)| \le d q^{m-1}. 
$$
The bound $dq^{m-1}$ is a good bound in the sense that it is attained when $d\le q$. An analogous good bound for projective hypersurfaces was conjectured by Tsfasman and proved by Serre \cite{Se} and, independently,  S{\o}rensen \cite{soer} in 1991. It states that if $\X = V(F)$ is the zero-set in $\PP^m(\overline{\FF}_q)$ of a homogeneous polynomial 
$F\in \Fq[x_0, x_1, \dots , x_m]$ of degree $d$, then 
\begin{equation}
\label{SerreIneq}
|\X (\Fq)| \le dq^{m-1} + \p_{m-2} , 
\end{equation}
where for $j\in \ZZ$, by $\p_j$ we denote $|\PP^j(\Fq)|$, i.e., $\p_j:= q^j + q^{j-1} + \dots + q + 1$ if $j\ge 0$, while we set $\p_j:=0$ if $j< 0$.  

In the case of affine algebraic varieties, say $\Zs = Z(f_1, \dots , f_r)$ 
in $\AA^m(\overline{\FF}_q)$, where  $f_1, \dots , f_r \in \Fq[x_1, \dots , x_m]$, there is an attractive alternative for determining or estimating the number of $\Fq$-rational points of $\Zs$. Namely, instead of 
the high dimensional variety $\Zs$,  we consider the zero-dimensional variety ${\Zs}_q = Z(f_1, \dots , f_r, x_1^q - x_1, \dots, x_m^q - x_m)$ in $\AA^m(\overline{\FF}_q)$ and observe that $|\Zs(\Fq)| = |{\Zs}_q|$. In algebraic terms (and using the superscript `$a$' to indicate the affine setting), instead of the ideal $I^a = \langle f_1, \dots , f_r \rangle$ of the polynomial ring $\overline{\FF}_q[x_1, \dots , x_m]$, we consider the larger ideal $I_q^a := I^a + \Gamma^a_q$, where $\Gamma_q^a:= \langle x_1^q - x_1, \dots, x_m^q - x_m \rangle$ is the vanishing ideal of $\AA^m(\Fq)$. Now it is a general fact 
that if $k$ is an arbitrary algebraically closed field and $J^a$ is an ideal of $k[x_1, \dots , x_m]$ whose 
zero-set $Z(J^a)$ in $\AA^m(k)$ is finite, then 
\begin{equation}
\label{FBound}
|Z(J^a)| \le  | \Delta(J^a) | \quad \text{with equality if  $J^a$ is a radical ideal}, 
\end{equation} 
where $\Delta(J^a) $ is the 
\emph{footprint} of $J^a$, defined (with respect to a fixed monomial order on the set $\Mon^a$ of all monomials in $x_1, \dots , x_m$) by 
$$
\Delta(J^a) := \{ \mu \in \Mon^a : \mu\ne \lm (f) \text{ for any } f \in J^a \text{ with } f\ne 0\} ,
$$
where for any nonzero polynomial $f \in k[x_1, \dots , x_m]$, by $\lm (f)$ we denote the \emph{leading monomial} of $f$ (with respect to the fixed monomial order). 
It is customary to refer to \eqref{FBound} as the \emph{footprint bound}. A proof of \eqref{FBound},  
can be found, e.g., in \cite[Thm. 8.32]{BW}.

In case a Gr\"obner basis of $J^a$, say $\{f_1, \dots , f_s\}$, can be found, then clearly, $\Delta(J^a)=  \{ \mu \in \Mon^a : \lm (f_i) \nmid \mu \text{ for } i=1, \dots , s\}$. What makes the case of $k = \overline{\FF}_q$ and $J^a= I_q^a = I^a + \Gamma^a_q$ particularly nice is that the usual generators $ x_1^q - x_1, \dots, x_m^q - x_m$ form a Gr\"obner basis of $\Gamma^a_q$ (since their leading monomials 
are pairwise coprime; see, e.g., \cite[Ch. 2, \S 9]{CLO}), 
and regardless of what  the ideal  $I^a$ is, $I_q^a$ is always a radical ideal,
(and, in fact, the vanishing ideal of $Z(I^a)(\Fq)$), 
thanks to a classical result of Terjanian \cite{Terj}. Thus, in this case \eqref{FBound} specializes to the 
$\Fq$-\emph{footprint formula}: 
\begin{equation}
\label{FFormula}
|\Zs(\Fq)| = |\Delta(I^a_q)|, \quad \text{where} \quad \Zs: = Z(I^a).
\end{equation} 
In practice, it is more convenient to consider the well-known notion of the reduction $\overline{f}$ of a nonzero polynomial $f\in \overline{\FF}_q[x_1, \dots , x_m]$ (see, e.g., \cite[Ch. 2]{Joly} or \cite[\S\,II]{BGH}) and observe that $\Delta(I^a_q) = \overline{\Delta}(I^a)$, where for any subset $T^a$ of $\overline{\FF}_q[x_1, \dots , x_m]$, we define
$$
\overline{\Delta}(T^a):=\left\{ \mu \in \Mon^a : \mu = \overline{\mu} \text{ and } \lm(\overline{f}) \nmid \mu \text{ for all } f\in T^a \text{ with } \overline{f} \ne 0\right\}.
$$
In particular, if 
$\{ f_1, \dots , f_r\}$ is any set of generators of $I^a$,
then 
\begin{equation}
\label{AFBound}
|\Zs(\Fq)| \le |\overline{\Delta}(f_1, \dots , f_r)|,  \quad \text{where} \quad \Zs: = Z(I^a) = Z(f_1, \dots , f_r).
\end{equation}
We shall refer to \eqref{AFBound} as the \emph{affine $\Fq$-footprint bound}. Some 
early references for footprint bounds such as these, and especially their applications to coding theory, are the works of H{\o}holdt \cite{Ho}, Fitzgerald and Lax \cite{FL}, and Geil and H{\o}holdt \cite{GH}. 

One of the most general estimates known for the number of $\Fq$-rational points of affine algebraic varieties defined over $\Fq$ is a result of Heijnen and Pellikaan~\cite{HP} that gives, in fact, the maximum possible value $e_r^{\Aff}(d,m)$ of $|\Zs(\Fq)|$, where $\Zs = Z(f_1, \dots , f_r)$ 
and $\{f_1, \dots , f_r\}$ vary over sets of $r$ linearly independent polynomials of degree $\le d$ in  $\Fq[x_1, \dots , x_m]$. Recent works such as \cite{GM} and \cite{BD} have shown that it is possible to derive results like these, and even more general results, by a careful study of footprints and shadows\footnote{The notion of shadow is complementary to that of footprint.}, together with some nontrivial combinatorial results such as the Kruskal-Katona theorem and its variants. On the other hand, the corresponding projective problem of the determination of the maximum number $e_r(d,m)$ of 
$|\X(\Fq)|$, where $\X = V(F_1, \dots , F_r)$ and $\{F_1, \dots , F_r\}$ vary over sets of $r$ linearly independent homogeneous polynomials of degree $d$ in  $\Fq[x_0, x_1, \dots , x_m]$ is still open, in general, although there has been some recent 
progress 
(see, e.g., \cite{DG, DG1, BDG}). 

Motivated by the above considerations, we begin in this paper the investigation of the $\Fq$-rational points of projective varieties $\X = V(\I)$ by associating to $\X$ a zero-dimensional variety ${\X}_q$ and developing suitable analogues of the footprint bound. The first question to be asked is an explicit  determination of the ideal $\Gs = I(\PP^m(\Fq))$ and its Gr\"obner basis. 
The first part is answered by Mercier and Rolland \cite{MR}, where it is shown that the homogeneous polynomials $x_i^qx_j - x_ix_j^q$, for $0\le i < j \le m$, generate $\Gs$. We 
supplement this by showing that the above generators themselves form a universal Gr\"obner basis of $\Gs$. 
More importantly, we systematically develop 
a useful notion of projective reduction (for monomials, and more generally, polynomials), that provides canonical representatives for the cosets in $\overline{\FF}_q[x_0, \dots , x_m]/\Gs$. 
A projective analogue of the footprint bound \eqref{FBound} is easily obtained 
using the theory of Hilbert functions. However, unlike in the affine case, the ideal $\I_q:= \I +\Gs$ of $\overline{\FF}_q[x_0, x_1, \dots , x_m]$ need not be a radical ideal 
even if $\I$ is a radical ideal. Nonetheless, we  prove a projective analogue of the $\Fq$-footprint formula \eqref{FFormula} using  a classical result of Macaulay. 
This is then used to derive a \emph{projective $\Fq$-footprint bound} analogous to \eqref{AFBound}, and, as an application, we deduce Serre's inequality  \eqref{SerreIneq} from it. 
In  a forthcoming work \cite{BDG2018}, it will be shown how the basic 
results in this paper can be combined with techniques from extremal combinatorics 
to obtain newer results concerning the determination of $e_r(d,m)$. 

After this work was completed, we became aware of the works of Carvalho, Neumann and Lopez \cite{CNL} and of Gonz\'alez-Sarabia, Mart\'inez-Bernal, Villarreal and Vivares \cite{GMVV}. 
In \cite{CNL}, the so-called projective nested cartesian codes are studied, and they obtain a Gr\"obner basis for the vanishing ideal of the set $X$ of points in $\PP^m(\Fq)$ having homogeneous coordinates in $A_0\times  \dots \times A_m$, where $A_0, \dots , A_m$ are subsets of $\Fq$ satisfying  a certain ``nested condition''. The case $A_0 = \dots = A_m=\Fq$ corresponds to $X=\PP^m(\Fq)$ that we consider here. Moreover, they determine the minimum distance of the corresponding codes, which amounts to a generalization of Serre's inequality shown here as an illustration of projective $\Fq$-footprint bound. However, they restrict to graded lexicographic order, and the methods are different. In \cite{GMVV}, the so-called generalized minimum distance functions are studied and their Lemma 3.4 is related to our projective $\Fq$-footprint formula (Theorem~\ref{thm:PFF} of this paper). Again, the formulation and methods of proof are quite different. 
%
%
\section{Projective Reduction and Fermat Polynomials}

Let $q$ be a prime power and let $m$ be a positive integer (which are kept fixed throughout the paper). 
It is well-known that an algebraic closure $\overline{\FF}_q$ of  the finite field $\Fq$ with $q$ elements is explicitly given by $\cup_{j\ge 1} \FF_{q^j}$ and thus we will assume that algebraic field extensions of $\Fq$ are subfields of this algebraic closure. Let $x_0, x_1, \dots , x_m$ be independent indeterminates over $\overline{\FF}_q$, and let 
$$
\Mon: = \text{the set of all monomials in $x_0, x_1, \dots , x_m$}.
$$
Moreover, for any nonnegative integer $e$, let 
$$
\Mon_e:= \{\mu \in \Mon : \deg \mu = e\}. 
$$
For a monomial $\mu=x_0^{a_0}\cdots x_{m}^{a_{m}}$ in $\Mon$, we let $\supp(\mu) := \{i\in \{0,1, \dots , m\} : a_i > 0\}$. 

\subsection{Projective Reduction} 
\label{subsec:2.1}
The projective analogue of the classical notion of reduction of a polynomial that was alluded to in the introduction is the following. 

\begin{definition}\label{def:monred}
Let $\mu \in \Mon$. If $\mu =1$, then we define $\overline{\mu}:= 1$. If $\mu \ne 1$, then there is a unique $\ell \in \Z$ with 
$0\le \ell  \le m$ such that 
$\mu=x_0^{a_0}\cdots x_{\ell}^{a_{\ell}}$ for some nonnegative integers $a_0, \dots , a_{\ell}$ with $a_{\ell}>0$. 
For $0 \le i \le \ell -1$, let $\overline{a}_{i} \in \{0, 1, \dots,q-1\}$ be the unique integer such that $\overline{a}_{i}=a_{i}$ if $0\le a_{i} \le q-1$, while  $\overline{a}_{i} \equiv a_{i} \pmod{q-1}$ and 
$1\le \overline{a}_{i} \le q-1$ if $a_{i} \ge q$. We then define 
%
$$
\overline{\mu}:=x_0^{\overline{a}_0}\cdots x_{\ell-1}^{\overline{a}_{\ell-1}}x_{\ell}^{a_{\ell}+\sum_{ i=0}^{\ell -1}a_{i}-\overline{a}_{i}}.
$$
It is clear that the monomial $\overline{\mu}$ is uniquely determined by $\mu$.
We let 
$$
\Monb : = \{ \mu \in \Mon :\overline{\mu}=\mu \} 
 \quad \text{and for any $e\ge 0$,} \quad 
\Monb_e:= \{\mu \in \Monb : \deg \mu = e\},
$$
The map $\mu \mapsto \overline{\mu}$ of $\Mon \to \Monb$ extends by linearity to polynomials. More precisely, if $k$ is an algebraic field extension of $\Fq$ and $f \in k[x_0,\dots,x_m]$, then there are unique nonzero scalars $c_1,\dots,c_N \in k$ and distinct monomials $\mu_1,\dots,\mu_N \in \Mon$ such that 
$$
f=c_1\mu_1+\cdots+c_N\mu_N \quad \text{and we define} \quad 
\overline{f}:=c_1\overline{\mu}_1+\cdots+c_N\overline{\mu}_N.
$$
Note that $\overline{f}$  is uniquely determined by $f$. Moreover, using the empty sum convention, if $f$ is the zero polynomial, then so is $\overline{f}$. We call $\overline{f}$ the \emph{projective reduction} of $f$.  We say that $f$ is \emph{projectively reduced} if $\overline{f}=f$. 
Two polynomials in $k[x_0, x_1, \dots , x_m]$ are said to be \emph{projectively equivalent} if they have the same projective reduction. 
\end{definition}

Some elementary properties of projective reduction are summarized below. 

\begin{proposition}\label{prop:basicprojred}
Let $k$ be an algebraic field extension of $\Fq$.  
\begin{enumerate}
\item[{\rm (i)}] The map  $f \mapsto \overline{f}$ of $k[x_0, \dots , x_m] \to k[x_0, \dots , x_m]$ is a $k$-linear homomorphism (of vector spaces) and it maps homogeneous polynomials of degree $e$ to homogeneous polynomials of degree $e$, where $e$ is any nonnegative integer. 
\item[{\rm (ii)}] The map  in {\rm (i)} is idempotent, i.e.,  $\overline{ \overline{f} } =  \overline{f} $ for all  $f\in k[x_0, \dots , x_m]$. 
\item[{\rm (iii)}] $\overline{fg} =  \overline{ \overline{f}\overline{g}} $ for all  $f,g\in k[x_0, \dots , x_m]$. 
\end{enumerate}
\end{proposition}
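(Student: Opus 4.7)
My plan is to prove the three parts in order, each time reducing by $k$-linearity (or bilinearity) to a statement about a single monomial or a pair of monomials, and then appealing to the arithmetic of the rule $a \mapsto \overline{a}$ in Definition~\ref{def:monred}.

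Part (i) is essentially formal. Linearity of $f \mapsto \overline{f}$ is built into the definition, which prescribes the image on the $k$-basis $\Mon$ of $k[x_0,\dots,x_m]$ and extends linearly. For preservation of degree I would observe that for $\mu = x_0^{a_0}\cdots x_\ell^{a_\ell}$ with $a_\ell > 0$, the exponents of $\overline{\mu}$ sum to $\sum_{i<\ell}\overline{a}_i + \bigl(a_\ell + \sum_{i<\ell}(a_i - \overline{a}_i)\bigr) = a_\ell + \sum_{i<\ell} a_i = \deg\mu$, so each monomial is sent to a monomial of the same degree, and the homogeneity claim follows.

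For (ii) it suffices to check $\overline{\overline{\mu}} = \overline{\mu}$ for every monomial $\mu$. The case $\mu = 1$ is immediate, so assume $\mu \ne 1$ with last nonzero exponent at position $\ell$. Two points need verification: (a) the last position of $\overline{\mu}$ is again $\ell$, because $\overline{a}_i > 0$ iff $a_i > 0$ for $i < \ell$ (directly from the definition) while the exponent of $\overline{\mu}$ at position $\ell$ is $a_\ell + \sum_{i<\ell}(a_i - \overline{a}_i) \ge a_\ell > 0$; in particular this gives $\supp(\overline{\mu}) = \supp(\mu)$. (b) The exponents $\overline{a}_i$ of $\overline{\mu}$ at positions $i < \ell$ already lie in $\{0, 1, \dots, q-1\}$ and are therefore fixed by the rule $a \mapsto \overline{a}$. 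Combined with the degree preservation from (i), this forces the exponent of $\overline{\mu}$ at position $\ell$ also to be unchanged, so $\overline{\overline{\mu}} = \overline{\mu}$.

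Part (iii) is the substantive part. By $k$-bilinearity (itself resting on (i)) it suffices to prove $\overline{\mu\nu} = \overline{\overline{\mu}\overline{\nu}}$ for monomials $\mu, \nu$; the cases $\mu = 1$ or $\nu = 1$ are immediate from (ii). For the general case I would establish the following criterion: for monomials $\rho, \sigma \ne 1$, $\overline{\rho} = \overline{\sigma}$ holds if and only if $\deg \rho = \deg \sigma$, $\supp(\rho) = \supp(\sigma)$, and the exponents of $\rho$ and $\sigma$ at every position strictly below their common last position are congruent modulo $q-1$. This is a direct consequence of the definition together with the fact that $a_i - \overline{a}_i$ is always a non-negative multiple of $q-1$. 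Applying the criterion to $\rho = \mu\nu$ and $\sigma = \overline{\mu}\overline{\nu}$: equality of degrees comes from (i); equality of supports follows from $\supp(\overline{\mu}) = \supp(\mu)$ and $\supp(\overline{\nu}) = \supp(\nu)$ (shown inside (ii)), so both $\rho$ and $\sigma$ have support $\supp(\mu)\cup\supp(\nu)$; and the congruence $a_i + b_i \equiv \overline{a}_i + \overline{b}_i \pmod{q-1}$ at each sub-last position is immediate from $\overline{a}_i \equiv a_i$ and $\overline{b}_i \equiv b_i \pmod{q-1}$. The main obstacle is bookkeeping rather than any real difficulty: positions lying between $\ell(\mu)$ and $\ell(\nu)$ need some care because of the asymmetry in Definition~\ref{def:monred} (the excess $\sum_{j<\ell(\mu)}(a_j - \overline{a}_j)$ is absorbed into the position $\ell(\mu)$ of $\overline{\mu}$ but not of $\mu$), but the mod $q-1$ criterion absorbs this cleanly.
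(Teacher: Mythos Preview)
Your proof is correct and follows essentially the same route as the paper: reduce (i)--(iii) by linearity to statements about single monomials (or pairs of monomials for (iii)) and then verify directly from the arithmetic of the map $a\mapsto\overline a$. The paper simply declares the monomial verifications ``obvious'' and ``direct,'' whereas you have spelled them out---in particular, your criterion (equal degree, equal support, and congruence mod $q-1$ of the sub-last exponents) is a clean way to organize the direct check for (iii).
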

\begin{proof}
Both (i) and (ii) are obvious from the definition once we note that $\deg \overline{\mu}= \deg \mu$ and 
$\overline{\overline{\mu}} =\overline{\mu}$ for all $\mu\in \Mon$. Moreover, (iii) follows from $k$-linearity once we verify directly that $\overline{\mu \nu}=\overline{\overline{\mu}\,\overline{\nu}}$ for any $\mu,\nu \in \Mon$. 
\end{proof}%

Let us define $\Monb^{(0)}:=\{x_0^{a}: a\ge 0\}$ and for $1\le \ell \le m$, 
$$
\Monb^{(\ell)}:=\{x_0^{a_0}\cdots x_{\ell}^{a_{\ell}} : 0\le a_{i} < q \text{ for } 0\le i < \ell  \text{ and } a_{\ell} >0 \}.
$$
Then for any nonnegative integer $e$, we have the disjoint union decompositions: 
\begin{equation}
\label{eq:disjunion}
\Monb = \coprod_{\ell=0}^m \Monb^{(\ell)}  \quad \text{and} \quad
\Monb_e = \coprod_{\ell=0}^m \Monb_e^{(\ell)}  \quad \text{where} \quad \Monb_e^{(\ell)}:= \Monb^{(\ell)}\cap \Monb_e.
\end{equation}
It is not difficult to obtain a general formula for the cardinality of $\Monb_e$ for any 
$e \ge 0$, but this will not concern us here. For now, we note that if $e\ge m(q-1)+1$, then for $0\le \ell  \le m$, the elements of $\Monb_e^{(\ell)}$ are precisely the monomials of the form $ x_0^{a_0}\cdots x_{\ell-1}^{a_{\ell-1}}x_\ell^{e- a_0 - \cdots - a_{\ell-1}}$, where $a_0 , \dots , a_{\ell-1} $ vary arbitrarily in $\{0, 1, \dots , q-1\}$ 
and the exponent of $x_{\ell}$ is 
$>0$ due to the condition on $e$.  Hence 
$$
|\Monb_e^{(\ell )} | = q^{\ell} \quad \text{and consequently,} \quad | \Monb_e| = \sum_{\ell=0}^m q^{\ell} = \p_m \quad \text{for all } e\ge m(q-1)+1.
$$
%
%

\subsection{Fermat Polynomials and the Division Algorithm} 
Define 
$$
\Phi_{ij} : = x_i^qx_j-x_ix_j^q \quad \text{for } 0\le i < j \le m. 
$$
Following \cite{DG1}, we call these 
the \emph{Fermat polynomials}. We shall denote by $\Phi$ the ordered $\binom{m+1}{2}$-tuple $\left( \Phi_{00}, \Phi_{01}, \dots , \Phi_{0m}, \Phi_{12}, \dots , \Phi_{m-1 , m}\right)$.
Suppose $k$ is an algebraic field extension of $\Fq$. 
The ideal of $k[x_0,    \dots , x_m]$ 
generated by $\{   \Phi_{ij} : 0\le i < j \le m\}$ will be denoted by $\Gamma_q(k)$. Let us fix a monomial order $\preccurlyeq$ on the set $\Mon$ of all monomials in $x_0, \dots , x_m$ in such a way that 
$$
x_0 \succ x_1 \succ \dots \succ x_m.
$$
For $0\ne f\in \overline{\FF}_q[x_0, \dots , x_m]$, we denote by $\lm (f)$ the \emph{leading monomial} of $f$, i.e., the largest  monomial (with respect to $\preccurlyeq$) appearing in $f$ with a nonzero coefficient. For example, $\lm(\Phi_{ij}) = x_i^qx_j$ for $ 0\le i < j \le m$. Given 
any 
$I \subseteq k[x_0,    \dots , x_m]$, we will 
denote by $\LT(I)$ the ideal of $k[x_0,    \dots , x_m]$ generated by 
\hbox{$\{\lm(f) :f\in I \text{ with } f \ne 0\}$.} 
This may be referred to as the \emph{leading term ideal} 
of $I$  (with respect to $\preccurlyeq$).  Let us recall that if $\phi= (\phi_1, \dots , \phi_s)$ is an ordered $s$-tuple of 
nonzero polynomials in $k[x_0,    \dots , x_m]$, then any $f\in k[x_0,    \dots , x_m]$ can be written as $f= f_1\phi_1+ \cdots f_s\phi_s +g$ for some $f_1, \dots , f_s, g\in k[x_0,    \dots , x_m]$ such that no monomial appearing in $g$ with a nonzero coefficient is divisible by any of $\lm(\phi_1), \dots , \lm(\phi_s)$. Here $g$ is uniquely determined by $f$ and the ordered tuple $\phi$. We call $g$  
the \emph{remainder} of $f$ upon division by $\phi$, and we write $f\to_{\phi}g$ to indicate this. 
Note that one arrives at $g$ and $f_1, \dots , f_s$ by a specific \emph{division algorithm} that is guaranteed to terminate in a finite number of steps, and thus we can and will speak of the first step of this algorithm for division of $f$ by $\phi$. 
(cf. \cite[Ch. 2]{CLO}.)  

Here is a connection between Fermat polynomials and projective reduction. 

\begin{proposition}
\label{prop:diveffect}
Let $\mu \in \Mon$. Then 
$\mu \rightarrow_{\Phi} \overline{\mu}.$ Two monomials in $\Mon$ have the same remainder upon  division 
by $\Phi$ if and only if they are projectively equivalent.
\end{proposition}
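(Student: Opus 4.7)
The plan is to analyze the division algorithm step by step, showing it preserves monomials and that each reduction step preserves the projective reduction, so the final remainder must equal $\overline{\mu}$. Two initial observations drive the argument. First, $\Monb$ consists exactly of those monomials not divisible by any $\lm(\Phi_{ij}) = x_i^q x_j$ (with $0 \le i < j \le m$): for $\nu = x_0^{b_0}\cdots x_m^{b_m}$ with largest nonzero exponent at index $\ell$, having $x_i^q x_j\mid\nu$ for some pair is equivalent to the existence of some $i<\ell$ with $b_i\ge q$, which is precisely the condition that fails for membership in $\Monb$. Second, dividing a monomial $\nu$ by $\Phi_{ij}$ (when $x_i^q x_j\mid \nu$) produces another monomial, namely
$$
\nu - \tfrac{\nu}{x_i^q x_j}\Phi_{ij} \;=\; \nu\cdot\frac{x_j^{q-1}}{x_i^{q-1}}.
$$
Hence every intermediate polynomial in the division of $\mu$ by $\Phi$ is a single monomial, and the algorithm terminates precisely at a monomial in $\Monb$. (Termination is automatic, since each reduction strictly lowers the current monomial under $\preccurlyeq$: with $i<j$ we have $x_i\succ x_j$.)

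The heart of the argument is the invariance claim: if $\nu\in\Mon$ and $x_i^q x_j\mid \nu$, then $\overline{\nu} = \overline{\nu\cdot x_j^{q-1}/x_i^{q-1}}$. I would prove this by directly unwinding Definition~\ref{def:monred}. Let $\ell$ be the largest index with positive exponent in $\nu$; the modified monomial $\nu' := \nu\cdot x_j^{q-1}/x_i^{q-1}$ still has its largest nonzero index at $\ell$. The reduced exponents agree trivially at positions $k<\ell$ with $k\ne i,j$. At position $i$, the shift $a_i\mapsto a_i-(q-1)$ preserves $\overline{a}_i$: since $a_i\ge q$, either the new exponent is still $\ge q$ and equality modulo $q-1$ yields the same residue in $[1,q-1]$, or it lies in $[1,q-1]$ and equals the previous $\overline{a}_i$ directly. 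At position $j$ (relevant when $j<\ell$, using $a_j\ge 1$), the shift $a_j\mapsto a_j+(q-1)$ similarly preserves $\overline{a}_j$. Finally, the exponent at position $\ell$ in $\overline{\nu}$ and $\overline{\nu'}$ match because the net change in $a_k-\overline{a}_k$ at positions $i$ and $j$ cancels; the subcase $j=\ell$ is handled analogously, with the $q-1$ added to $a_\ell$ in $\nu'$ exactly compensating for the $-(q-1)$ change in the excess contributed by position $i$.

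Granting the invariance, induction on the number of reduction steps yields $\overline{p} = \overline{\mu}$ for every intermediate polynomial $p$, so the final remainder $g$ — a monomial in $\Monb$ — satisfies $g = \overline{g} = \overline{\mu}$, which proves $\mu\to_{\Phi}\overline{\mu}$. For the second assertion: if $\mu_1,\mu_2\in\Mon$, the first part identifies their remainders as $\overline{\mu_1}$ and $\overline{\mu_2}$, and these coincide if and only if $\mu_1,\mu_2$ are projectively equivalent by definition. The main obstacle will be the case analysis in the invariance claim; it is purely routine but requires careful bookkeeping of how the reduced exponents $\overline{a}_k$ behave across the shifts at positions $i$, $j$, and (when $j=\ell$) $\ell$.
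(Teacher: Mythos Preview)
Your proof is correct and follows essentially the same strategy as the paper: show that each step of the division algorithm replaces one monomial by another and preserves an invariant that pins down $\overline{\mu}$, so the (necessarily reduced) remainder must equal $\overline{\mu}$. Your packaging is slightly more streamlined---you use the monomial order itself for termination (the paper introduces an auxiliary weight function $\mathrm{wt}(\mu)=\sum_{i<\ell}(a_i-\overline{a}_i)q^{m-i}$ and inducts on it) and directly verify that $\overline{\,\cdot\,}$ is preserved under a single reduction step (the paper instead tracks support and exponents modulo $q-1$ separately and argues these together determine $\overline{\mu}$).
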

\begin{proof}
If $\mu=1$, then $\overline{\mu}=1$ and clearly $\mu \to_{\Phi} \overline{\mu}$. 
Suppose  $\mu=x_0^{a_0}\cdots x_{\ell}^{a_{\ell}}$ 
for some nonnegative integers $a_0, \dots , a_{\ell}$ with $a_{\ell}>0$. We define the \emph{weight} of $\mu$ as follows.
$$
\mathrm{wt}(\mu)=\sum_{i=0}^{{\ell} -1} (a_i-\overline{a}_i)q^{m-i}, \quad \text{where $\overline{a}_i$ are 
as in Definition \ref{def:monred}.} 
$$
Note that  
$\mathrm{wt}(\mu) \ge 0$. Moreover, 
$\mathrm{wt}(\mu)=0$ if and only if $\mu$ is projectively reduced.

{\bf Claim 1:} 
$\mu \rightarrow_{\Phi} \nu$ for some projectively reduced monomial $\nu \in \Monb$. 

We prove the claim by induction on $\mathrm{wt}(\mu)$. First, suppose 
$\mathrm{wt}(\mu)=0$, i.e., $\mu = \overline{\mu}$. 
Then $0 \le a_i \le q-1$ for all $i=0,\dots \ell -1$. Since $\lm(\Phi_{i\, j})=x_i^qx_j$ with $i<j$, it follows that 
none of the leading monomials of  $\Phi_{i \, j}$ divides $\mu$. Hence $\mu \rightarrow_{\Phi} \mu = \overline{\mu}$. 

Now suppose $\mathrm{wt}(\mu)>0$, and the claim holds for monomials of weight smaller than 
$\mathrm{wt}(\mu)$. Then 
$\mu$ is not projectively reduced. Let $i$ be the smallest index such that $a_i>q-1$. Note that $i < \ell$, since $\mu$ is not projectively reduced. 
Further let $j$ be the smallest index such that $i < j \le \ell$ and $a_j>0$.
Then the first step of the division algorithm for the division of $\mu$ by $\Phi$  will be the reduction
\begin{equation}\label{eq:stepindivision}
\mu \rightarrow \mu-\dfrac{\mu}{x_i^{q}x_j}\Phi_{i\, j}= \mu' \quad \text{where} \quad \mu':= x_i^{a_i-q+1}x_j^{a_j+q-1}\prod_{\substack{0 \le s \le \ell \\ s \ne i, \; s\ne j}} 
x_s^{a_s}.
\end{equation}
However,
$$
\mathrm{wt}\left( \mu' 
\right)=\mathrm{wt}(\mu)-(q-1)(q^{m-i}-q^{m-j})<\mathrm{wt}(\mu).
$$
Thus Claim 1 follows from the induction hypothesis. 

{\bf Claim 2:} 
If $\mu \rightarrow_{\Phi}\nu$ for some 
$\nu \in \Monb$, then $\nu=\overline{\mu}$.

To prove this claim, note that if $\mu$ is transformed to $\mu'$ by the first step of the division algorithm, then from \eqref{eq:stepindivision}, we see that $\supp(\mu) = \supp(\mu')$ and moreover, 
if  $\mu'=x_0^{a'_0}\cdots x_{\ell}^{a'_{\ell}}$ 
for some nonnegative integers $a'_0, \dots , a'_{\ell}$, then 
\begin{equation}\label{eq:exponents}
a_i \equiv a'_i ({\rm mod} \; q-1) 
 \quad \text{and} \quad  a_i =0 \Longleftrightarrow a'_i = 0 \quad \text{for }0\le i \le \ell.
\end{equation}
Likewise if in the next step of the division algorithm, $\mu'$ is transformed to some $\mu''$ in $\Mon$, then 
  $\supp(\mu') = \supp(\mu'')$ and  the exponents of $\mu''$ will satisfy conditions similar to that in \eqref{eq:exponents}. It follows that if $\mu \rightarrow_{\Phi}\nu$, then $\nu =x_0^{b_0}\cdots x_{\ell}^{b_{\ell}}$ 
for some nonnegative integers $b_0, \dots , b_{\ell}$ that satisfy \eqref{eq:exponents} with $a'_i$ replaced by $b_i$. Since $\nu$ is projectively reduced, this implies that $\nu=\overline{\mu}$, is desired. 

The proposition 
now follows directly from the above two claims.
\end{proof}

\begin{corollary}\label{cor:divonpol}
Let $k$ be an algebraic extension of $\Fq$ 
and let 
$f \in k[x_0,\dots,x_m]$. Then $f \rightarrow_{\Phi} \overline{f}.$ Moreover $\overline{f}=0$ if and only if $f \in \Gs (k)$. 
\end{corollary}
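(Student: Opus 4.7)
The plan is to deduce both assertions from Proposition~\ref{prop:diveffect} by exploiting $k$-linearity and the uniqueness of remainders in the division algorithm.

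First I would write $f = c_1 \mu_1 + \cdots + c_N \mu_N$ with $c_i \in k^\times$ and $\mu_i \in \Mon$ distinct. By Proposition~\ref{prop:diveffect}, for each $i$ we have $\mu_i \to_{\Phi} \overline{\mu}_i$, i.e., there exist polynomials $h_{ijk} \in k[x_0,\dots,x_m]$ with $\mu_i = \sum_{j<k} h_{ijk}\Phi_{jk} + \overline{\mu}_i$. Summing with the coefficients $c_i$ yields
$$
f = \sum_{0\le j<k\le m} \Bigl(\sum_{i=1}^N c_i h_{ijk}\Bigr)\Phi_{jk} + \overline{f},
$$
where $\overline{f} = c_1\overline{\mu}_1 + \cdots + c_N\overline{\mu}_N$ by Proposition~\ref{prop:basicprojred}(i). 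To conclude $f \to_\Phi \overline{f}$ by the uniqueness of the remainder, I must check that no monomial appearing in $\overline{f}$ is divisible by any $\lm(\Phi_{jk}) = x_j^q x_k$. This is immediate from the definition of projective reduction: a monomial $x_0^{b_0}\cdots x_\ell^{b_\ell}$ (with $b_\ell>0$) is projectively reduced exactly when $b_i \le q-1$ for $i<\ell$, i.e., exactly when no $x_j^q x_k$ with $j<k$ divides it. Since each $\overline{\mu}_i$ is projectively reduced, so is $\overline{f}$, and the remainder uniqueness in the division algorithm then gives $f \to_\Phi \overline{f}$.

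For the second assertion, one direction is immediate from the display above: if $\overline{f}=0$ then $f = \sum h_{jk}\Phi_{jk} \in \Gs(k)$. For the converse, suppose $f = \sum_{j<k} g_{jk}\Phi_{jk} \in \Gs(k)$. By the $k$-linearity of projective reduction and Proposition~\ref{prop:basicprojred}(iii), $\overline{f} = \sum_{j<k} \overline{g_{jk}\Phi_{jk}} = \sum_{j<k}\overline{\,\overline{g_{jk}}\cdot\overline{\Phi_{jk}}\,}$, so it suffices to verify $\overline{\Phi_{jk}}=0$ for each $j<k$. This is a one-line direct calculation from Definition~\ref{def:monred}: for $\mu = x_j^q x_k$ with $j<k$, we have $\ell=k$, $a_j = q$, so $\overline{a}_j = 1$ and $\overline{\mu} = x_j x_k^{1+(q-1)} = x_j x_k^q$; the monomial $x_j x_k^q$ is already projectively reduced, so $\overline{\Phi_{jk}} = x_j x_k^q - x_j x_k^q = 0$.

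The only point that requires a little care is the verification that ``no monomial of $\overline{f}$ is divisible by any $\lm(\Phi_{jk})$'' after collecting the terms $c_i \overline{\mu}_i$; this reduces to noting that projective reducedness is preserved under taking linear combinations of projectively reduced monomials, which is transparent from the definition. Everything else is a packaging of Proposition~\ref{prop:diveffect} together with the basic linearity and multiplicativity properties in Proposition~\ref{prop:basicprojred}.
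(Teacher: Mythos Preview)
Your argument for the second assertion is correct and matches the paper's. The gap is in the first assertion, specifically in your appeal to ``uniqueness of the remainder''.

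What you have established is a decomposition $f=\sum h_{jk}\Phi_{jk}+\overline{f}$ in which no monomial of $\overline{f}$ is divisible by any $\lm(\Phi_{jk})$. You then conclude $f\to_\Phi\overline{f}$ by ``uniqueness of the remainder''. But for a general ordered tuple $\phi=(\phi_1,\dots,\phi_s)$, the remainder produced by the division algorithm is \emph{not} characterized as the unique $r$ with $f-r\in\langle\phi_1,\dots,\phi_s\rangle$ and no monomial of $r$ divisible by any $\lm(\phi_i)$; such $r$ can fail to be unique when $\{\phi_i\}$ is not a Gr\"obner basis. (For instance, with lex order $x\succ y$ and $\phi=(xy-1,\,y^2-1)$, the polynomial $f=xy^2-x$ admits both $r=-x+y$ and $r=0$.) The paper's statement that ``$g$ is uniquely determined by $f$ and the ordered tuple $\phi$'' refers to the determinism of the algorithm, not to uniqueness among all such decompositions. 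Since the fact that $\Phi$ is a Gr\"obner basis (Theorem~\ref{thm:GBforI}) is proved \emph{after} this corollary and in fact \emph{uses} it, you cannot invoke it here without circularity.

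There is an easy repair that stays within your framework: prove the ``only if'' direction of the second assertion first (your argument for it uses only Proposition~\ref{prop:basicprojred}, so this is legitimate). Then for the first assertion, let $r$ be the actual output of the division algorithm on $f$; then $f-r\in\Gamma_q(k)$ and $r$ is projectively reduced. Since also $f-\overline{f}\in\Gamma_q(k)$ (from your decomposition), we get $\overline{f}-r\in\Gamma_q(k)$, whence $\overline{\overline{f}-r}=0$ by the part already proved. But $\overline{f}-r$ is projectively reduced, so $\overline{f}-r=0$, i.e.\ $r=\overline{f}$. The paper takes a different route for the first assertion: it tracks the division algorithm directly via an induction on the weight $\mathrm{wt}(f)=\sum_i\mathrm{wt}(\mu_i)$, showing that each step of the algorithm strictly decreases the weight while preserving the projective reduction. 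Your (repaired) argument is arguably cleaner, since it avoids reopening the mechanics of the algorithm.
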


\begin{proof}
Let us first prove that $f \rightarrow_{\Phi} \overline{f}$. 
Write  $f=c_1\mu_1+\cdots+c_N\mu_N$, with $\mu_1,\dots,\mu_N$ distinct monomials in $\Mon$ and $c_1,\dots,c_N$ nonzero elements of $k$. We define 
$$
\mathrm{wt}(f):=\mathrm{wt}(\mu_1)+\cdots+\mathrm{wt}(\mu_N),
$$
where for $\mu\in \Mon$, $\mathrm{wt}(\mu)$ is as in the proof of Proposition~\ref{prop:diveffect}. 
Then $f$ is projectively reduced if and only if $\mathrm{wt}(f)=0$. Hence if $\mathrm{wt}(f)=0$, there is nothing to prove.

Now suppose that $\mathrm{wt}(f)>0$. Equation \eqref{eq:stepindivision} implies that a single step in the division algorithm will replace a monomial occurring in $f$ by a projectively equivalent monomial of lower weight. Hence if $\mathrm{wt}(f)>0$, the first step of the division algorithm will replace $f$ by a polynomial $g$ satisfying $\overline{g}=\overline{f}$ and $\mathrm{wt}(g)<\mathrm{wt}(f)$. By induction, the first assertion in the corollary follows.

We now prove the second assertion. To this end, note that since $f \rightarrow_{\Phi} \overline{f}$, in case $\overline{f} =0$, it follows from the division algorithm that $f$ is in the ideal generated by 
$\{   \Phi_{ij} : 0\le i < j \le m\}$, i.e.,  $f\in \Gamma_q(k)$. Conversely if $f\in \Gamma_q(k)$, then 
we can write 
$f=\sum_{i<j}f_{i\, j}\Phi_{i\, j}$ for some  $f_{i\, j}\in k[x_0,\dots,x_m]$. Using parts (i) and (iii) of 
Proposition~\ref{prop:basicprojred}, we see that
$$
\overline{f}=\sum_{i<j}\overline{f_{i\, j}\Phi_{i\, j}}=\sum_{i<j}\overline{\overline{f_{i\, j}} \, \overline{\Phi_{i\, j}}}=0,
$$ 
where the last equality follows since 
$\overline{\Phi_{i\, j}}= x_i x_j^q - x_j^q x_i = 0$ for 
$0\le i < j \le m$. 
This completes the proof. 
\end{proof}

\subsection{Vanishing Ideal of $\PP^m(\Fq)$} Let $k$ be a field. Recall that the \emph{vanishing ideal} over $k$ of 
a subset  $\X$ of $\PP^m(k)$ is the ideal $I(\X)$ of $k[x_0, x_1, \dots , x_m]$ generated by the homogeneous polynomials in $k[x_0, x_1, \dots , x_m]$ that vanish at every point of $\X$. 
The following result was proved by Mercier and Rolland \cite[Thm. 2.1]{MR}. 

\begin{theorem}
\label{thm:MR}
The vanishing ideal over $\Fq$ of $\, \PP^m(\Fq)$ is $\Gs(\Fq)$, i.e., it is the ideal of $\Fq[x_0,\dots,x_m]$ generated by the Fermat polynomials $x_i^qx_j-x_ix_j^q$  ($0\le i < j \le m$).
\end{theorem}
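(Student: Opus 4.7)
The containment $\Gs(\Fq) \subseteq I(\PP^m(\Fq))$ is immediate: for any $[a_0{:}\cdots{:}a_m] \in \PP^m(\Fq)$ and $0 \le i < j \le m$, Fermat's little theorem gives $a_i^q a_j - a_i a_j^q = a_i a_j - a_i a_j = 0$. The real content is the reverse containment. My plan is to reduce it, using the projective reduction machinery already developed, to a simple induction built on the stratification of $\PP^m(\Fq)$ and the standard fact that a polynomial of individual degree $< q$ vanishing on all of $\Fq^\ell$ is zero.

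Let $f \in I(\PP^m(\Fq))$; since $I(\PP^m(\Fq))$ is homogeneous I may assume $f$ is homogeneous of degree $e$. By Corollary~\ref{cor:divonpol}, $f \to_\Phi \overline{f}$ and hence $f - \overline{f} \in \Gs(\Fq)$; in particular $\overline{f} \in I(\PP^m(\Fq))$ and, by Proposition~\ref{prop:basicprojred}\,(i), $\overline{f}$ is homogeneous of degree $e$. Since $f \in \Gs(\Fq)$ iff $\overline{f} = 0$ (again by Corollary~\ref{cor:divonpol} applied to $\overline{f}$, using idempotence from Proposition~\ref{prop:basicprojred}\,(ii)), it suffices to establish the following claim: every projectively reduced homogeneous $g \in \Fq[x_0,\dots,x_m]$ that vanishes on $\PP^m(\Fq)$ is zero.

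To prove the claim, I would use the decomposition \eqref{eq:disjunion} to write $g = \sum_{\ell=0}^m g_\ell$, where $g_\ell$ collects the terms of $g$ supported on $\Monb^{(\ell)}_e$ (so $g_\ell \in \Fq[x_0,\dots,x_\ell]$ with each variable $x_0,\dots,x_{\ell-1}$ appearing to degree $< q$, and $x_\ell$ appearing to positive degree). Stratify $\PP^m(\Fq) = \coprod_{\ell=0}^m U_\ell$ by the largest index of a nonzero coordinate, so $U_\ell = \{[t_0{:}\cdots{:}t_{\ell-1}{:}1{:}0{:}\cdots{:}0] : t_0,\dots,t_{\ell-1} \in \Fq\}$. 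I claim inductively that $g_\ell = 0$ for every $\ell$. At a point $P = [t_0{:}\cdots{:}t_{\ell-1}{:}1{:}0{:}\cdots{:}0] \in U_\ell$, every monomial in $\Monb^{(\ell')}_e$ with $\ell' > \ell$ vanishes (the variable $x_{\ell'}$ with positive exponent is zero at $P$), and by the inductive hypothesis $g_{\ell'} = 0$ for $\ell' < \ell$. Therefore $g(P) = g_\ell(P) = h(t_0,\dots,t_{\ell-1})$, where $h \in \Fq[t_0,\dots,t_{\ell-1}]$ is obtained from $g_\ell$ by setting $x_\ell = 1$ (so $h$ has each partial degree $< q$, by the definition of $\Monb^{(\ell)}$). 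Since $g$ vanishes on $U_\ell$, $h$ vanishes on all of $\Fq^\ell$, which forces $h = 0$ and hence $g_\ell = 0$. The base case $\ell = 0$ is the single-point calculation $g_0 = c\, x_0^e$, $c = g(1,0,\dots,0) = 0$.

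The main obstacle, such as it is, is organizing the induction so that evaluating $g_\ell$ at a point of $U_\ell$ produces a polynomial of individual degree $< q$; the structural definition of $\Monb^{(\ell)}$ (in particular the constraint $a_i < q$ for $i < \ell$) is precisely what makes this work, and the stratification by "last nonzero coordinate" dovetails with the decomposition \eqref{eq:disjunion} because monomials in $\Monb^{(\ell')}_e$ with $\ell' > \ell$ are annihilated on $U_\ell$. Once the claim is proved, the theorem follows.
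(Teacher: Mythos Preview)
Your proof is correct. However, the paper does not actually prove Theorem~\ref{thm:MR}: it quotes the result from Mercier and Rolland \cite[Thm.~2.1]{MR} and uses it as a black box (indeed, the remark following Corollary~\ref{cor:Fermatgenerate} explicitly points elsewhere for a proof independent of \cite{MR}). So there is no ``paper's own proof'' to compare against.

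That said, your argument is a genuinely nice way to make the paper self-contained at this point. You exploit the fact that Proposition~\ref{prop:diveffect} and Corollary~\ref{cor:divonpol} are proved \emph{before} Theorem~\ref{thm:MR} and do not depend on it, so the equivalence $f\in\Gs(\Fq)\Leftrightarrow\overline f=0$ is already available. This reduces the theorem to the purely combinatorial claim that a projectively reduced homogeneous polynomial vanishing on $\PP^m(\Fq)$ must be zero, and your pairing of the monomial decomposition~\eqref{eq:disjunion} with the stratification $\PP^m(\Fq)=\coprod_\ell U_\ell$ by last nonzero coordinate handles that cleanly. The one step worth spelling out in a final write-up is why $h=0$ forces $g_\ell=0$: since all monomials in $g_\ell$ have the same total degree $e$, the exponent $a_\ell$ of $x_\ell$ is determined by $(a_0,\dots,a_{\ell-1})$, so the dehomogenization $x_\ell\mapsto 1$ is injective on the monomials of $\Monb_e^{(\ell)}$ and no cancellation occurs.
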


We observe that this can be used to deduce a more general result. 

\begin{corollary}\label{cor:Fermatgenerate}
Let $k$ be an algebraic extension of $\Fq$. Then the vanishing ideal over $k$ of the set 
$\PP^m(\Fq)$ of all $\Fq$-rational points of $\PP^m(k)$ is $\Gs(k)$. 
\end{corollary}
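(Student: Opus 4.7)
The plan is to prove the two inclusions separately. The inclusion $\Gs(k) \subseteq I(\PP^m(\Fq))$ is immediate: each Fermat generator $\Phi_{ij}=x_i^qx_j - x_ix_j^q$ is homogeneous and vanishes at every point of $\PP^m(\Fq)$, since $a^q=a$ for all $a\in\Fq$ by Fermat's little theorem.

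For the reverse inclusion, since by definition the vanishing ideal is generated by homogeneous polynomials that vanish on $\PP^m(\Fq)$, it suffices to show that any such homogeneous $F \in k[x_0,\dots,x_m]$ lies in $\Gs(k)$. The strategy is a straightforward descent argument that reduces to the case $k=\Fq$ already handled by Theorem~\ref{thm:MR}.

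The finitely many coefficients of $F$ lie in some finite subextension $\FF_{q^n}\subseteq k$. I would fix an $\Fq$-basis $e_1,\dots,e_n$ of $\FF_{q^n}$ and write uniquely
\[
F = e_1 F_1 + \cdots + e_n F_n \quad \text{with} \quad F_i \in \Fq[x_0,\dots,x_m].
\]
Because the $e_i$ are $\Fq$-linearly independent and each monomial appearing in $F$ has the same total degree, each $F_i$ is homogeneous of the same degree as $F$. For any $(a_0,\dots,a_m)\in\Fq^{m+1}$ representing a point of $\PP^m(\Fq)$, the identity $0 = F(a) = \sum_i e_i F_i(a)$ holds in $\FF_{q^n}$ with $F_i(a)\in\Fq$, and $\Fq$-linear independence of the $e_i$ forces every $F_i(a)=0$. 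Thus each $F_i$ is a homogeneous polynomial in $\Fq[x_0,\dots,x_m]$ vanishing on $\PP^m(\Fq)$, so Theorem~\ref{thm:MR} yields $F_i\in\Gs(\Fq)\subseteq\Gs(k)$, and finally $F=\sum_i e_i F_i\in\Gs(k)$.

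There is no real obstacle here; the argument is a routine linear-algebraic descent that transfers the result from $\Fq$-coefficients to coefficients in any algebraic extension of $\Fq$. The entire substantive content rests in Theorem~\ref{thm:MR}, which is used as a black box. An alternative formulation would invoke Corollary~\ref{cor:divonpol} by showing $\overline{F}=0$ under the same linear-independence decomposition, but the direct descent above is the cleanest route.
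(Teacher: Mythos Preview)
Your proof is correct and follows essentially the same approach as the paper: both argue the nontrivial inclusion by passing to a finite subextension $\FF_{q^n}$ containing the coefficients, expanding along an $\Fq$-basis to obtain polynomials $F_i\in\Fq[x_0,\dots,x_m]$, and then invoking Theorem~\ref{thm:MR} on each $F_i$. Your version is marginally tidier in that you explicitly restrict to homogeneous $F$ (which suffices since the vanishing ideal is generated by its homogeneous elements) and note that each $F_i$ inherits the same degree, but the substance is identical.
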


\begin{proof}
Let $I$ 
denote the vanishing ideal of $\PP^m(\Fq)$ over $k$. 
Clearly, $\Gs(k)\subseteq I$. To prove the other inclusion, suppose $f\in I$. 
Then $f$ is a finite sum of the form 
$$
f = \sum c_{i_0 i_1 \dots i_m}x_0^{i_0}x_1^{i_1} \cdots x_m^{i_m} \quad \text{where the coefficients } c_{i_0 i_1 \dots i_m} \text{ are in } k. 
$$
Since this is a finite sum, there is a positive integer $e$ such that $\FF_{q^e} \subseteq k$ and 
all the coefficients are in $\FF_{q^e}$. Now let $\{\alpha_1, \dots , \alpha_e\}$ be a $\Fq$-basis of $\FF_{q^e}$.
Then 
$$
c_{i_0 i_1 \dots i_m} = \sum_{j=1}^e c_{i_0 i_1 \dots i_m}^{(j)} \alpha_j  \quad \text{for some } c_{i_0 i_1 \dots i_m}^{(j)} \in \Fq.
$$
Consequently, $f = \sum_{j=1}^e f_j \alpha_j$ for some $f_j \in \Fq[x_0, x_1, \dots , x_m]$. Since $f(P)=0$ for all $P\in \PP^m(\Fq)$, we see that $f_j(P)=0$ for all $P\in \PP^m(\Fq)$ and $j=1, \dots , e$. Hence 
$f_j$ is in the  vanishing ideal of $\PP^m(\Fq)$ over $\Fq$, and so by Theorem~\ref{thm:MR}, $f_j \in \Gs(\Fq)$ for each $j=1, \dots , e$. Consequently, $f\in \Gs(k)$. 
%
\end{proof}

\begin{remark} 
{\rm 
An argument similar to the proof of Corollary~\ref{cor:Fermatgenerate} will show that
$\Gs(\Fq) = \Gs(k) \cap \Fq[x_0, \dots , x_m]$. In other words, the vanishing ideal of $\PP^m(\Fq)$ over $\Fq$ is the contraction of the vanishing ideal of $\PP^m(\Fq)$ over $k$ to $\Fq[x_0, \dots , x_m]$.  
An alternative proof of Corollary~\ref{cor:Fermatgenerate} that is independent of Theorem~\ref{thm:MR}, can be found in \S 3.3 of the expository article \cite{G}. 
}
\end{remark} 

Recall that a \emph{universal Gr\"obner basis} of an ideal $I$ in a polynomial ring over a field
is a subset of $I$ 
that is a Gr\"obner basis of $I$ with respect to any term order. 

\begin{theorem}
\label{thm:GBforI}
Let $k$ be an algebraic extension of $\Fq$ and let $I$ be the vanishing ideal of $\PP^m(\Fq)$ over $k$.  Then the set $\{\Phi_{ij} : 0\le i < j \le m\}$ of Fermat polynomials is  a reduced Gr\"obner basis of $I$, and also 
a universal Gr\"obner basis of $I$. 
 \end{theorem}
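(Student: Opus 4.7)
The plan is to establish the three assertions---Gr\"obner basis with respect to the fixed monomial order $\preccurlyeq$, reducedness of this basis, and universality---in that order. The first two rest almost immediately on Corollary~\ref{cor:divonpol} and a short exponent check; the third will follow by a symmetry argument, exploiting the fact that the set of Fermat polynomials is invariant up to sign under any permutation of the variable indices.

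For the Gr\"obner basis property with respect to $\preccurlyeq$: by Corollary~\ref{cor:Fermatgenerate}, $I = \Gs(k)$ is generated by the $\Phi_{ij}$, so it suffices to show $\LT(I) = \langle x_i^q x_j : 0 \le i < j \le m \rangle$. Take any nonzero $f \in I$. By Corollary~\ref{cor:divonpol}, the division algorithm of $f$ by $\Phi$ terminates with remainder $\overline{f} = 0$. However, any leading term that fails to be divisible by some $\lm(\Phi_{ij})$ during the algorithm is transferred into the remainder and cannot subsequently be cancelled; hence the vanishing of the final remainder forces the current leading monomial at \emph{every} step, in particular at the first, to be divisible by some $\lm(\Phi_{ij}) = x_i^q x_j$. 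Applied to the first step this gives $\lm(f) \in \langle x_i^q x_j : i < j \rangle$, as required.

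Reducedness next requires each $\Phi_{ij}$ to be monic (clear) and no monomial of $\Phi_{ij}$ to be divisible by any $\lm(\Phi_{kl}) = x_k^q x_l$ with $(k,l) \ne (i,j)$. This is a direct exponent count, using $q \ge 2$: in $x_i^q x_j$ only $x_i$ has exponent $\ge q$, so $x_k^q x_l \mid x_i^q x_j$ forces $k = i$ and then $l = j$; in the trailing monomial $x_i x_j^q$ only $x_j$ has exponent $\ge q$, so $x_k^q x_l \mid x_i x_j^q$ forces $k = j$ and then $l = i$, contradicting $k < l$ together with $i < j$.

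For universality, let $\preccurlyeq'$ be any monomial order on $\Mon$. Choose the permutation $\sigma$ of $\{0, \ldots, m\}$ with $x_{\sigma(0)} \succ' \cdots \succ' x_{\sigma(m)}$, and consider the $k$-algebra automorphism $\tau$ of $k[x_0, \ldots, x_m]$ given by $x_i \mapsto x_{\sigma(i)}$. A direct computation yields $\tau(\Phi_{ij}) = \pm \Phi_{ab}$, where $\{a,b\} = \{\sigma(i), \sigma(j)\}$ and $a < b$; since $\sigma$ is a bijection, $\tau$ permutes $\{\Phi_{ij}\}$ up to signs and fixes $I$. Transport $\preccurlyeq'$ via $\tau$ to the monomial order $\preccurlyeq^*$ defined by $\mu \preccurlyeq^* \nu \iff \tau(\mu) \preccurlyeq' \tau(\nu)$; then $x_0 \succ^* \cdots \succ^* x_m$, so the arguments of the two preceding paragraphs---which use only the descending chain $x_0 \succ \cdots \succ x_m$ on the variables (precisely the only property of the order used in the proofs of Proposition~\ref{prop:diveffect} and Corollary~\ref{cor:divonpol})---apply to $\preccurlyeq^*$ and show that $\{\Phi_{ij}\}$ is a Gr\"obner basis of $I$ with respect to $\preccurlyeq^*$. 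Pushing forward by $\tau$ and using $\tau(I) = I$, $\tau(\{\Phi_{ij}\}) = \pm\{\Phi_{ab}\}$, transfers this to $\{\Phi_{ab}\}$ being a Gr\"obner basis of $I$ with respect to $\preccurlyeq'$. The main obstacle is the bookkeeping in this final step: verifying carefully that the earlier arguments really do depend on the monomial order only through the descending-chain condition, so that they can be legitimately transported via $\tau$.
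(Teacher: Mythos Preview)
Your proof is correct, and the reducedness and universality arguments match the paper's essentially verbatim (the paper also observes that only the condition $x_0\succ\cdots\succ x_m$ was used, and that permuting variables fixes the set $\{\Phi_{ij}\}$ up to sign). The one point of divergence is the Gr\"obner basis step itself: the paper applies Buchberger's criterion, noting that each $S$-polynomial $S(\Phi_{ij},\Phi_{rs})$ lies in $\Gs(k)$ and therefore reduces to $0$ by Corollary~\ref{cor:divonpol}; you instead argue directly from the definition of a Gr\"obner basis, using the same corollary to conclude that every nonzero $f\in I$ has remainder $0$ under division by $\Phi$, whence $\lm(f)$ must already be divisible by some $\lm(\Phi_{ij})$. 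Both routes are short and hinge on Corollary~\ref{cor:divonpol}; yours is marginally more self-contained in that it avoids citing Buchberger's criterion, while the paper's is the more conventional packaging.
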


\begin{proof}
We know from Corollary~\ref{cor:Fermatgenerate} that the Fermat polynomials generate $I$. 
Moreover, for any $i,j,r,s\in \Z$ with $0\le i < j \le m$ and $0\le r< s \le m$, it is clear that the $S$-polynomial $S(\Phi_{ij}, \Phi_{rs})$ is in the ideal $\Gs(k)$. Hence by 
Corollary~\ref{cor:divonpol}, $S(\Phi_{ij}, \Phi_{rs}) \to_{\Phi} 0$. So it follows from  Buchberger's criterion \cite[Ch. 2, \S 6]{CLO} that $\{\Phi_{ij} : 0\le i < j \le m\}$ forms a  Gr\"obner basis  of $I$. 
Further, $\lm(\Phi_{ij}) = x_i^qx_j$ is clearly not divisible by any of the monomials appearing in $\Phi_{rs}$ whenever 
 $i,j,r,s\in \Z$ are such that $0\le i < j \le m$, $0\le r< s \le m$, and $(i,j) \ne (r,s)$. Thus, we see that 
 $\{\Phi_{ij} : 0\le i < j \le m\}$ is a  reduced Gr\"obner basis  of $I$. 
To see that this set is also a universal Gr\"obner basis, it suffices to note that thus far, we had put no condition on the term order $\preccurlyeq$, except that the variables are ordered as $x_0 \succ x_1 \succ \cdots \succ x_m$. Changing the order on the variables amounts to considering the Fermat polynomials $\Phi_{ij}$ with the variables permuted. But any such permutation of variables will 
leave the set $\{\Phi_{ij} : 0\le i < j \le m\}$ invariant, except for 
a sign change in some of the  $\Phi_{ij}$'s. 
Thus we see that 
$\{\Phi_{ij} : 0\le i < j \le m\}$ is a  Gr\"obner basis  of $I$ with respect to any term order. 
\end{proof}

\begin{corollary}
\label{cor:GBLT}
Let $k$ be an algebraic extension of $\Fq$ and let $I$ be the vanishing ideal of $\PP^m(\Fq)$ over $k$.  
Then $\{x_i^qx_j : 0\le i< j\le m\}$ is a Gr\"obner basis of $\LT(I)$. 
\end{corollary}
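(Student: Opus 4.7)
The plan is to derive this as a direct consequence of Theorem~\ref{thm:GBforI} combined with the general principle that any set of monomial generators of a monomial ideal is automatically a Gr\"obner basis of that ideal. The key point is that $\LT(I)$ is itself a monomial ideal, and the defining property of a Gr\"obner basis of $I$ is precisely that the leading monomials of its elements generate $\LT(I)$.

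First I would invoke Theorem~\ref{thm:GBforI}, which tells us that $\{\Phi_{ij} : 0\le i<j\le m\}$ is a Gr\"obner basis of $I$. By the very definition of a Gr\"obner basis, this means $\LT(I) = \langle \lm(\Phi_{ij}) : 0\le i<j\le m\rangle = \langle x_i^q x_j : 0\le i<j\le m\rangle$. In particular, every $x_i^q x_j$ belongs to $\LT(I)$, and the set generates $\LT(I)$ as an ideal.

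To upgrade ``set of generators'' to ``Gr\"obner basis'', I would apply Buchberger's criterion. Since each $x_i^q x_j$ coincides with its own leading monomial, for any pair $(i,j)\ne (r,s)$ the $S$-polynomial takes the form
\[
S(x_i^q x_j,\, x_r^q x_s) \;=\; \frac{\mathrm{lcm}(x_i^q x_j,\, x_r^q x_s)}{x_i^q x_j}\cdot x_i^q x_j \;-\; \frac{\mathrm{lcm}(x_i^q x_j,\, x_r^q x_s)}{x_r^q x_s}\cdot x_r^q x_s \;=\; 0,
\]
which trivially reduces to $0$ upon division by any ordered tuple. Thus Buchberger's criterion is satisfied vacuously, and $\{x_i^q x_j : 0\le i<j\le m\}$ is a Gr\"obner basis of $\LT(I)$.

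There is no genuine obstacle to overcome here: the corollary is a formal consequence of the preceding theorem together with the standard observation that generators of a monomial ideal always form a Gr\"obner basis. The only bookkeeping is ensuring we read off $\lm(\Phi_{ij}) = x_i^q x_j$ correctly, which is immediate from the chosen monomial order with $x_0\succ x_1\succ \cdots \succ x_m$.
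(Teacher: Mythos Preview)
Your proof is correct and follows essentially the same approach as the paper: invoke Theorem~\ref{thm:GBforI} to conclude that $\{x_i^qx_j : 0\le i<j\le m\}$ generates $\LT(I)$, then observe that all $S$-polynomials of these monomials vanish, so Buchberger's criterion gives the result. The paper's proof is slightly terser but the logic is identical.
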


\begin{proof}
By Theorem~\ref{thm:GBforI}, $\{\Phi_{ij} : 0\le i < j \le m\}$ is a Gr\"obner basis of $I$. 
Also, $\lm(\Phi_{ij}) = x_i^qx_j$ for $0\le i < j \le m$. So $\{x_i^qx_j : 0\le i< j\le m\}$ 
generates $\LT(I)$. Moreover, it is easily seen that $S(x_i^qx_j , \, x_r^qx_s) =0$ for any $i,j,r,s\in \Z$ with $0\le i < j \le m$ and $0\le r< s \le m$. Thus, from Buchberger's criterion, it follows that 
$\{x_i^qx_j : 0\le i< j\le m\}$  is a Gr\"obner basis of $\LT(I)$. 
\end{proof}

We end this section with the following useful consequences of the determination of Gr\"obner bases of the vanishing ideals of projective spaces over finite fields. 

\begin{corollary}
\label{cor:generatorsLT}
Let $k$ be an algebraic extension of $\Fq$ and let $I$ be the vanishing ideal of $\PP^m(\Fq)$ over $k$.  
Then every element of $k[x_0, \dots , x_m]/I$ can be uniquely written as $f+I$, where $f$ is a projectively reduced polynomial. Moreover, every element of $k[x_0, \dots , x_m]/\LT(I)$ can also be uniquely written as $f+\LT(I)$, where $f$ is a projectively reduced polynomial. 
\end{corollary}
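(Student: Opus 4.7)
The plan is to derive both claims from the single observation that the $k$-vector space $V$ of projectively reduced polynomials equals the $k$-linear span of $\Monb$. This is immediate from Proposition~\ref{prop:basicprojred}: the map $f \mapsto \overline{f}$ is a $k$-linear idempotent whose image lies in $\mathrm{span}_k \Monb$ and which acts as the identity on $\Monb$, so its fixed subspace $V$ coincides with $\mathrm{span}_k \Monb$.

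For the first assertion, existence is direct from Corollary~\ref{cor:divonpol}: for any $f \in k[x_0,\dots,x_m]$, the relation $f \rightarrow_\Phi \overline{f}$ shows that $f - \overline{f}$ belongs to the ideal generated by the Fermat polynomials, which by Corollary~\ref{cor:Fermatgenerate} is $I$. Thus $f + I = \overline{f} + I$ with $\overline{f}$ projectively reduced. For uniqueness, suppose $f_1, f_2 \in V$ with $f_1 - f_2 \in I = \Gs(k)$. The ``moreover'' part of Corollary~\ref{cor:divonpol} then gives $\overline{f_1 - f_2} = 0$, while $k$-linearity (Proposition~\ref{prop:basicprojred}(i)) combined with $\overline{f_i} = f_i$ gives $\overline{f_1 - f_2} = f_1 - f_2$; so $f_1 = f_2$.

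For the second assertion, the key step is to identify $\Monb$ with the complement $\Mon \setminus \LT(I)$. By Corollary~\ref{cor:GBLT}, $\LT(I)$ is the monomial ideal generated by $\{x_i^q x_j : 0 \le i < j \le m\}$, so $\mu \in \LT(I)$ if and only if some $x_i^q x_j$ with $i < j$ divides $\mu$, which happens iff there exist indices $i < j$ in the support of $\mu$ with $x_i$ appearing to exponent $\ge q$. The negation is precisely the condition in Definition~\ref{def:monred} that only the largest-indexed variable occurring in $\mu$ may have exponent $\ge q$; hence $\Mon \setminus \LT(I) = \Monb$. Since $\LT(I)$ is a monomial ideal, the cosets of monomials in $\Mon \setminus \LT(I)$ form a $k$-basis of $k[x_0,\dots,x_m]/\LT(I)$, so every coset is uniquely represented by an element of $\mathrm{span}_k \Monb = V$.

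I expect no real obstacle here: both parts reduce to Gr\"obner-basis machinery already set up in the paper, with the only mildly nontrivial content being the combinatorial identification $\Mon \setminus \LT(I) = \Monb$, which is a direct unpacking of Definition~\ref{def:monred}.
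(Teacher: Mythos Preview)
Your proof is correct and follows essentially the same approach as the paper's: for the first assertion you use Corollary~\ref{cor:divonpol} and Corollary~\ref{cor:Fermatgenerate} for existence and the ``moreover'' clause of Corollary~\ref{cor:divonpol} together with linearity for uniqueness, just as the paper does. For the second assertion, your explicit identification $\Mon \setminus \LT(I) = \Monb$ is exactly what the paper's separate existence and uniqueness arguments establish (the paper shows ``not reduced $\Rightarrow$ in $\LT(I)$'' for existence and ``reduced $\Rightarrow$ not divisible by any $x_i^qx_j$'' for uniqueness), so the only difference is that you package both directions into one combinatorial statement and then invoke the standard basis fact for monomial-ideal quotients.
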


\begin{proof}
Let $f \in k[x_0,\dots,x_m]$. By Corollary \ref{cor:divonpol} and  Corollary~\ref{cor:Fermatgenerate}, 
 $f-\overline{f} \in \Gs(k) = I$. Hence $f+I(\PP^m(\Fq))=\overline{f}+I(\PP^m(\Fq)).$ Moreover, if $f,g$ are two projectively reduced polynomials such that $f-g \in I$, then 
 Corollary~\ref{cor:Fermatgenerate}, Corollary \ref{cor:divonpol} and Proposition~\ref{prop:basicprojred} imply that  $f-g=\overline{f-g}=0,$ This proves the first assertion. 

For the second assertion, note that if $\mu \in \Mon$ is not projectively reduced, then $\mu$ is divisible by $x_i^qx_j$ for some $i,j\in \Z$ with $0\le i< j \le m$, and thus $\mu \in \LT(I)$. This shows that 
 every coset in $k[x_0, \dots , x_m]/\LT(I)$  can be written as 
 $f+\mathrm{LT}(I)$ for some
 projectively reduced polynomial $f$. Moreover, if $f,g$ are two distinct projectively reduced polynomials such that $f-g \in \mathrm{LT}(I)$, then by Corollary \ref{cor:GBLT}, 
 $\lm(f-g)$ is divisible by an element of $\{x_i^qx_j \mid 0 \le i < j \le m\}.$  But this is impossible, since $f-g \ne 0$ and $f-g$ is a  linear combination of projectively reduced monomials.
\end{proof}

\section{Projective Footprint Bound}

We let $\N$ denote the set of all nonnegative integers. For a parameter $e$ varying in the set $\N$, we shall write ``for all $e\gg 0$'' to mean ``for all large enough $e$'', that is to say for all $e\in \N$ that are larger than or equal to  some fixed $e_0\in \N$.  

\subsection{Hilbert Functions and Projective Footprints} 
Let $k$ be a field. Note that the polynomial ring  $k[x_0, \dots, x_m]$ is a graded $k$-algebra. 
For any $e\in \N$, its $e^{\rm th}$ graded component $k[x_0, \dots, x_m]_e$ consists of homogeneous polynomials of degree~$e$ (including the zero polynomial), and it 
is a vector space over $k$ of dimension $\binom{m+e}{e}$. If $\J$ is a homogeneous ideal of $k[x_0, \dots, x_m]$, then $\J = \oplus_{e\ge 0} \J_e$, where $\J_e:= \J \cap  k[x_0, \dots, x_m]_e$ for $e\in \N$. The \emph{Hilbert function} 
of $k[x_0, \dots, x_m]/\J$ is the map 
$$
h_{\J}: \N \to \N  \quad  \text{defined by} \quad  h_{\J}(e):=  \dim_k k[x_0, \dots, x_m]_e/\J_e \quad \text{for } e\in \N.
$$
It is well-known that there is a polynomial $\chi_{\J} \in \Q[t]$ 
such that 
$$
h_{\J} (e) = \chi_{\J} (e) \quad \text{for all } e\gg 0.
$$
Clearly $\chi_{\J} $ is uniquely determined by $\J$ and it is called the \emph{Hilbert polynomial} of $\J$. Note that if $\I$ is a homogeneous ideal of $k[x_0, \dots, x_m]$ such that $\J \subseteq \I$, then
\begin{equation}
\label{eq:IJ}
h_{\I}(e) \le h_{\J}(e) \text{ for all } e\in \N \quad \text{and hence} \quad \chi_{\I}(e) \le \chi_{\J}(e) \text{ for all } e\gg 0.
\end{equation}
The following result is classical. A proof can be gleaned, for instance, from \cite[\S 3 of Ch. 8 and 9]{CLO} and \cite[\S 7 of Ch. 1]{H}. 
\begin{theorem} 
\label{thm:ProjBasics}
Let $k$ be an algebraically closed field and let $\J$ be a homogeneous ideal of $k[x_0, \dots, x_m]$, and $\X = V(\J)$ be the corresponding projective variety consisting of all $P\in \PP^m(k)$ such that $F(P)=0$ for all homogeneous $F\in \J$. Then:
\begin{enumerate}
\item[{\rm (i)}] \emph{(Projective Nullstellensatz)} 
If $V(\J)$ is empty, then $\sqrt{\J} \supseteq \langle x_0, x_1, \dots , x_m\rangle$, whereas if 
$\X = V(\J)$ is nonempty, then $I(\X) = \sqrt{\J}$. 
\item[{\rm (ii)}] 
Suppose $\X$ is nonempty and $\I = I(\X)$. Then $\deg \chi_{\I} = \deg \chi_{\J} = \dim \X$, where 
$\dim \X$ denotes the dimension of $\X$ as a projective algebraic variety. 
Moreover if $d = \dim X$ and we write $\chi_{\I}(t) = a_0t^d + \cdots + a_d$, then $\deg \X = d!a_0$, where $\deg \X$ denotes the degree of $\X$, and this is a positive integer with the property 
that if $\I = \langle F\rangle$ for some homogeneous 
$F\in  k[x_0, \dots, x_m]$, then $\deg \X = \deg F$, whereas if $\X$ is a finite set, then $\deg \X = |\X|$.   
\end{enumerate}
\end{theorem}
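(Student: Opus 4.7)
The plan is to treat the two parts by reducing to standard affine results. For Part (i), I would exploit the affine cone: since $\J$ is homogeneous, the affine zero-set $V_a(\J) \subseteq \AA^{m+1}(k)$ is precisely the affine cone $C(\X)$ over $\X$, which always contains the origin and is stable under scalar multiplication. If $V(\J) = \emptyset$, then $V_a(\J) \subseteq \{0\}$, so the affine Nullstellensatz yields $\sqrt{\J} \supseteq I_a(\{0\}) = \langle x_0, \dots, x_m\rangle$. If $V(\J)$ is nonempty, then $V_a(\J) = C(\X)$. A short homogeneity argument shows that a polynomial vanishes on $C(\X)$ if and only if each of its homogeneous components vanishes on $C(\X)$, if and only if each vanishes on $\X$; hence $I_a(C(\X)) = I(\X)$, and the affine Nullstellensatz then gives $I(\X) = \sqrt{\J}$.

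For Part (ii), I would first reduce the equality of degrees of the Hilbert polynomials of $\I$ and $\J$ to the fact that $\J \subseteq \I = \sqrt{\J}$ and, by the Noetherian property, $\I^N \subseteq \J$ for some $N$. From this, a filtration argument shows that the kernel $\I/\J$ of the surjection $R/\J \twoheadrightarrow R/\I$ (with $R = k[x_0, \dots, x_m]$) has Hilbert polynomial of strictly smaller degree, so $\deg \chi_\J = \deg \chi_\I$. The identification of this common degree with $\dim \X$ proceeds by induction on $m$ using a generic linear form $\ell \in R_1$ that is a non-zero-divisor modulo $\I$, which cuts $\X$ with a hyperplane and drops the dimension by one while giving the relation $h_{\I + \langle \ell\rangle}(e) = h_\I(e) - h_\I(e-1)$; applying this $d$ times reduces to the zero-dimensional case, in which the Hilbert polynomial is eventually the constant $\chi_\I(e) = |\X|$.

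For the leading coefficient statement, once $d = \dim \X$ is established and we write $\chi_\I(t) = a_0 t^d + \cdots + a_d$, the quantity $d! a_0$ may be taken as the \emph{definition} of $\deg \X$. One then verifies the two compatibilities stated. If $\I = \langle F\rangle$ with $F$ homogeneous of degree $D$, the exact sequence
\begin{equation*}
0 \longrightarrow R(-D) \xrightarrow{\cdot F} R \longrightarrow R/\I \longrightarrow 0
\end{equation*}
yields $h_\I(e) = \binom{m+e}{m} - \binom{m+e-D}{m}$, whose leading term is $\frac{D}{(m-1)!} e^{m-1}$, so $\deg \X = D$. If $\X$ is finite, the induction above has already shown $\chi_\I(e) = |\X|$ eventually, so $d=0$ and $\deg \X = 0! \cdot |\X| = |\X|$.

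The main obstacle is the induction step for Part (ii): producing a linear form $\ell$ that is a non-zero-divisor modulo $\I$ and shows that intersecting $\X$ with $V(\ell)$ cuts the dimension by exactly one while preserving the leading coefficient of the Hilbert polynomial. This is the key geometric input (prime avoidance together with the behaviour of dimension under hyperplane section), and it is precisely the content that is developed carefully in \cite[Ch.~I.7]{H} and \cite[Ch.~9]{CLO}, which is why the authors simply cite these references.
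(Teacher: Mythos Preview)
The paper does not actually prove this theorem: it states that the result is classical and refers the reader to \cite[Ch.~8--9, \S3]{CLO} and \cite[Ch.~I, \S7]{H}. Your sketch is therefore not being compared against an argument in the paper, but against the standard development in those references, and in that respect it is essentially on target---you even correctly note at the end that this is why the authors content themselves with citations.

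One small correction to your filtration step in Part~(ii): it is not true in general that the graded module $\I/\J$ has Hilbert polynomial of \emph{strictly} smaller degree than $\chi_{\I}$. What you get from $\I^{N}\subseteq \J\subseteq \I$ is that the support of $\I/\J$ is contained in $V(\I)$, hence $\deg \chi_{\I/\J}\le \deg \chi_{R/\I}$ (not strict inequality). Combined with the additivity $\chi_{R/\J}=\chi_{R/\I}+\chi_{\I/\J}$ and the obvious inequality $\chi_{R/\J}(e)\ge \chi_{R/\I}(e)$ for $e\gg 0$, this still yields $\deg\chi_{\J}=\deg\chi_{\I}$, so your conclusion survives. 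The rest of your outline---the affine-cone argument for~(i), prime avoidance to find a regular linear form, the hyperplane-section recursion $h_{\I+\langle\ell\rangle}(e)=h_{\I}(e)-h_{\I}(e-1)$, and the short exact sequence for a hypersurface---is the standard route taken in the cited references.
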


The next theorem 
is also classical and is sometimes ascribed to Macaulay. See, e.g., 
\cite[\S 3 of Ch. 9]{CLO} for a proof.  Henceforth 
we fix a monomial order $\preccurlyeq$ on the set $\Mon$ of all monomials in $k[x_0, \dots, x_m]$ in such a way that $x_0\succ  \cdots \succ x_m$. The leading monomial $\lm(F)$ of a nonzero $F\in k[x_0, \dots, x_m]$ and the leading term ideal $\LT(J)$ of an ideal $J$ of $k[x_0, \dots, x_m]$ will be understood to be with respect to 
this fixed monomial order. 

\begin{theorem} 
\label{thm:MacaulayThm}
Let $k$ be a field and let $\J$ be a homogeneous ideal of $k[x_0, \dots, x_m]$. 
Then $h_{\J} (e) = h_{\LT(\J)}(e) $ and therefore  $\chi_{\J} (e) = \chi_{\LT(\J)}(e)$  for all $e\in \N$. 
\end{theorem}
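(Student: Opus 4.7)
The plan is to exhibit, for each $e\in \N$, a common $k$-basis for the graded pieces $k[x_0,\dots,x_m]_e/\J_e$ and $k[x_0,\dots,x_m]_e/\LT(\J)_e$, indexed by the set $B_e := \Mon_e \setminus \LT(\J)$ of degree-$e$ monomials not lying in the leading-term ideal. Once this is achieved, the equality $h_{\J}(e)=|B_e|=h_{\LT(\J)}(e)$ is immediate for every $e$, and the polynomial identity $\chi_{\J}(e)=\chi_{\LT(\J)}(e)$ then follows at once from the definition of the Hilbert polynomial.

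First I would observe that since $\J$ is homogeneous, so is $\LT(\J)$: every $f\in \J$ splits into homogeneous components, each again in $\J$, and the leading monomial of $f$ coincides with the leading monomial of one of these components. Hence $\LT(\J)$ is a monomial ideal whose degree-$e$ part $\LT(\J)_e$ is exactly the $k$-span of $\Mon_e \cap \LT(\J)$. Consequently the residues $\{\mu + \LT(\J)_e : \mu \in B_e\}$ form a $k$-basis of $k[x_0,\dots,x_m]_e/\LT(\J)_e$, giving $h_{\LT(\J)}(e)=|B_e|$ with no further work.

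For the other side I would show that the residues $\{\mu + \J_e : \mu \in B_e\}$ form a basis of $k[x_0,\dots,x_m]_e/\J_e$ by a division-by-leading-monomial argument. Given a nonzero homogeneous $f$ of degree $e$, as long as $\lm(f)\in \LT(\J)$, one can select a homogeneous $h\in \J$ (of some degree $d\le e$) with $\lm(h)\mid \lm(f)$, and subtract a suitable scalar multiple of $(\lm(f)/\lm(h))\,h\in \J_e$ from $f$ to strictly decrease $\lm(f)$ in the fixed monomial order. Well-foundedness of $\preccurlyeq$ ensures that this terminates at some $r$ whose every monomial lies in $B_e$, and then $f\equiv r\pmod{\J_e}$. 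Linear independence is then essentially tautological: any nontrivial $k$-linear relation $\sum_{\mu \in B_e} c_\mu\mu \in \J_e$ would have a leading monomial lying simultaneously in $B_e$ and in $\LT(\J)$.

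The whole argument is essentially formal; the only mildly delicate point is that each reduction step must remain inside the graded piece of degree $e$, but this is automatic once one insists on a homogeneous choice of $h$, since then $(\lm(f)/\lm(h))\,h$ is homogeneous of degree~$e$. For the second assertion, note that $h_{\J}$ and $h_{\LT(\J)}$ agree as functions on $\N$ and each eventually coincides with a polynomial in $\Q[t]$; these polynomials must therefore be identical, so $\chi_{\J}=\chi_{\LT(\J)}$, yielding the stated equality of values for every $e\in \N$.
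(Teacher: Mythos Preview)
The paper does not actually prove this theorem; it simply records it as a classical result attributed to Macaulay and refers the reader to \cite[\S 3 of Ch.~9]{CLO}. Your argument is precisely the standard one found there: exhibit the degree-$e$ standard monomials $B_e=\Mon_e\setminus\LT(\J)$ as a common basis modulo $\J_e$ and modulo $\LT(\J)_e$.

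One small imprecision is worth flagging. Your reduction loop says: ``as long as $\lm(f)\in\LT(\J)$, subtract a suitable multiple of some $h\in\J$ to strictly lower $\lm(f)$.'' This halts when the \emph{leading} monomial of the current remainder lies in $B_e$ (or the remainder is zero), but it does not, as written, force \emph{every} monomial of the remainder into $B_e$; lower monomials may still lie in $\LT(\J)$. The fix is routine---either reduce with respect to any monomial of $f$ lying in $\LT(\J)$ (not just the leading one), or argue by induction on $\lm(f)$, peeling off the leading term when it already lies in $B_e$---but the sentence ``terminates at some $r$ whose every monomial lies in $B_e$'' overstates what the described loop delivers. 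With that adjustment the proof is complete and matches the textbook argument the paper cites.
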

%

We define the notion of a projective footprint over an arbitrary field as follows. 

\begin{definition} 
\label{def:PFP}
Let $k$ be a field and let $\J$ be a homogeneous ideal of $k[x_0, \dots, x_m]$. 
Then for any $e\in \N$, the~set
$$
\Delta_e(\J):=\{ \mu \in \Mon_e : \mu \ne \lm(F) \text{ for all homogeneous $F\in \J$ with } F\ne 0 \}. 
$$
is called the \emph{projective footprint} of $\J$ in degree $e$. 
\end{definition}

\begin{theorem}[Projective Footprint Bound]
\label{thm:PFB}
Let $k$ be an algebraically closed field and let $\J$ be a homogeneous ideal of $k[x_0, \dots, x_m]$, such that the corresponding projective variety $\X = V(\J)$ 
is a finite subset of $\PP^m(k)$. Then for all $e\gg 0$, 
\begin{equation}
\label{eq:PFB}
|V(\J)| \le |\Delta_e(\J)| \quad \text{with equality if $\J$ is a radical ideal.}
\end{equation}
\end{theorem}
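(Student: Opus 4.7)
The plan is to link the three quantities $|V(\J)|$, $\chi_{\J}(e)$ for $e\gg 0$, and $|\Delta_e(\J)|$ in two steps: a Nullstellensatz/Hilbert-polynomial comparison that handles $|V(\J)|$ versus $\chi_{\J}(e)$, and then Macaulay's theorem (Theorem~\ref{thm:MacaulayThm}) combined with the monomial-basis description of a quotient by a monomial ideal to identify $\chi_{\J}(e)$ with $|\Delta_e(\J)|$.

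First I would dispose of the empty case. If $\X=V(\J)=\emptyset$, Theorem~\ref{thm:ProjBasics}(i) yields $\sqrt{\J}\supseteq\langle x_0,\dots,x_m\rangle$, so each $x_i$ has some power in $\J$; a pigeonhole argument then gives $\J_e=k[x_0,\dots,x_m]_e$ for all $e\gg 0$. Consequently every degree-$e$ monomial is the leading monomial of a nonzero homogeneous element of $\J$, so $\Delta_e(\J)=\emptyset$, and both sides of \eqref{eq:PFB} are $0$.

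Now assume $\X$ is nonempty, so $\X$ is finite of dimension $0$, and set $\I:=I(\X)$. By Theorem~\ref{thm:ProjBasics}(ii), $\chi_{\I}$ is the constant polynomial $|\X|$. Because $\J\subseteq\sqrt{\J}=\I$ by Theorem~\ref{thm:ProjBasics}(i), the monotonicity statement \eqref{eq:IJ} gives $|\X|=\chi_{\I}(e)\le\chi_{\J}(e)$ for all $e\gg 0$, with equality as soon as $\J=\sqrt{\J}$, that is, whenever $\J$ is radical.

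It remains to identify $\chi_{\J}(e)$ with $|\Delta_e(\J)|$ for large $e$. Since $\J$ is homogeneous, so is $\LT(\J)$, and the key (routine) equivalence to verify is
\[
\mu\in\LT(\J)\cap\Mon_e \iff \mu=\lm(G)\text{ for some nonzero homogeneous } G\in\J_e.
\]
The nontrivial direction writes $\mu=\nu\cdot\lm(F)$ for some nonzero homogeneous $F\in\J$ (allowed since $\J$ is homogeneous) and takes $G:=\nu F$. Because $\LT(\J)$ is a monomial ideal, the cosets of the monomials not lying in it form a $k$-basis of $k[x_0,\dots,x_m]_e/\LT(\J)_e$; by the equivalence, this basis is exactly $\Delta_e(\J)$, giving $h_{\LT(\J)}(e)=|\Delta_e(\J)|$ for every $e\in\N$. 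Theorem~\ref{thm:MacaulayThm} then yields $\chi_{\J}(e)=h_{\J}(e)=h_{\LT(\J)}(e)=|\Delta_e(\J)|$ for all $e\gg 0$. Concatenating with the inequality of the previous paragraph proves \eqref{eq:PFB}, with equality when $\J$ is radical. The only step that needs any real care is the equivalence displayed above, which is exactly what bridges the algebraic Hilbert data and the combinatorial count $|\Delta_e(\J)|$.
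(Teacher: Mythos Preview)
Your proof is correct and follows essentially the same route as the paper: compare $|V(\J)|=\chi_{\I}$ with $\chi_{\J}$ via the Nullstellensatz and \eqref{eq:IJ}, then identify $\chi_{\J}(e)=h_{\LT(\J)}(e)=|\Delta_e(\J)|$ via Macaulay's theorem and the monomial-basis description. The only minor differences are cosmetic: you treat the empty case a bit more strongly (showing $\Delta_e(\J)=\emptyset$ for $e\gg 0$ regardless of radicality, via pigeonhole), and you spell out explicitly the equivalence $\mu\in\LT(\J)\cap\Mon_e\Leftrightarrow\mu=\lm(G)$ for homogeneous $G\in\J_e$, which the paper records as the equality $\Delta_e(\J)=\{\mu\in\Mon_e:\mu\notin\LT(\J)\}$.
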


\begin{proof}
If $\X$ is empty, then the inequality in \eqref{eq:PFB} holds trivially for all $e\in \N$; if, in addition, $\J$ is a radical ideal, then by part (i) of Theorem~\ref{thm:ProjBasics}, 
$\Delta_e(\J)$ is empty 
for all $e\ge 1$ and so the equality holds in this case. Now suppose $\X$ is nonempty. 
Let $\I = I(\X)$ be the vanishing ideal of $\X = V(\J)$ over $k$. Since $\X$ is finite, $\dim \X =0$ and by part (ii) of Theorem~\ref{thm:ProjBasics}, 
both $\chi_{\I}$ and $\chi_{\J}$ are constant polynomials and $\chi_{\I} = |\X|$. On the other hand, since $\J$ is a homogeneous ideal of $k[x_0, \dots , x_m]$, it is easily seen that $\Delta_e(\J) = \{\mu\in \Mon_e : \mu \not\in \LT(\J)\}$.
Now $\LT(\J)$ is a monomial ideal and it is well-known (and easily deduced, e.g., from the division algorithm) that  $\{\mu + \LT(J) : \mu \in \Mon_e , \; \mu \not \in \LT(\J)\}$ is a $k$-basis of 
$\left(k[x_0, \dots , x_m]/\LT(\J)\right)_e$, which implies: 
$$
h_{\LT(\J)}(e) = |\Delta_e(\J)| \quad \text{for all $e\in \N$ and so} \quad 
\chi_{\LT(\J)}(e) = |\Delta_e(\J)| \quad \text{for all }e\gg 0.
$$
Thus in view of \eqref{eq:IJ} and Theorem~\ref{thm:MacaulayThm}, we obtain
$$
|V(\J)|  = \chi_{\I}(e) \le \chi_{\J}(e)  =  \chi_{\LT(\J)}(e) =  |\Delta_e(\J)| \quad \text{for all }e\gg 0,
$$
and the equality clearly holds if $\I=\J$, i.e., 
if $\J$ is a radical ideal. 
\end{proof}

\begin{remark}
\label{rem:PFB}
The above proof of Theorem~\ref{thm:PFB} together with 
Projective Nullstellensatz  shows that if $k$ is algebraically closed and $\J$ a homogeneous ideal of $k[x_0, \dots , x_m]$ such that $V(\J)$ is finite, then $\chi_{\J}$ and $\chi_{\sqrt{\J}}$ are constant polynomials and
$$
\chi_{\sqrt{\J}} = |V(\J)|  \quad \text{and} \quad  \chi_{\J} =  |\Delta_e(\J)| \quad \text{for all }e\gg 0. 
$$
\end{remark}

\subsection{Projective $\Fq$-Footprint Formula} 
Let $k$ 
be an algebraic closure of $\Fq$. By Corollary~\ref{cor:Fermatgenerate}, the vanishing ideal over $k$ of $\PP^m(\Fq)$ is given by 
$$
I \left(\PP^m(\Fq)\right) = \Gs(k) := \langle x_i^q x_j -x_ix_j^q : 0\le i < j \le m\rangle \subseteq k[x_0, \dots , x_m].
$$
If $\J$ is a homogeneous ideal  of $k[x_0, \dots , x_m]$ and if $\X = V(\J)$ is the corresponding projective algebraic variety in $\PP^m(k)$, then we let 
$$
\J_q: = \J + \Gs(k) \quad \text{and} \quad \X_q:= V(\J_q). 
$$
Clearly, $\J_q$ is a homogeneous ideal  of $k[x_0, \dots , x_m]$ and $\X_q(k) =  V (\J_q) = \X(\Fq)$. 

\begin{example}
\label{ex:JqNotRad}
{\rm 
Suppose $m=1$ and $F(x_0, x_1) : = x_0^q x_1 - x_0x_1^q + x_0^{q+1}$. Consider the homogeneous ideal  $\J:= \langle F(x_0, x_1)\rangle$ 
of $k[x_0, x_1]$. 
Observe that $\J = \sqrt{\J}$. To see this, 
note that $k[x_0,x_1]$ is a UFD and $F$ does not have a multiple root in $\PP^1(k)$. Indeed, $F(x_0, x_1) = x_0G(x_0, x_1)$ where $G(x_0, x_1):= x_0^{q-1}x_1 - x_1^q + x_0^q$ does not have $[0:1]$ as a root and also no multiple root of the form $[1:a]$ since the derivative with respect to $x_1$ of $G(1, x_1)$ is never zero. On the other hand, $\J_q = \J + \langle  x_0^q x_1 - x_0x_1^q \rangle$ contains $x_0^{q+1}$, but does not contain $x_0$ (since every nonzero element of $\J_q$ has degree $\ge q+1$). Thus $\J_q$ is not a radical ideal even though $\J$ is a radical ideal. 
}
\end{example}

In light of this example, the following result seems noteworthy. 

\begin{theorem}[Projective $\Fq$-Footprint Formula] 
\label{thm:PFF}
Let $k$ 
be an algebraic closure of $\Fq$. Also, let 
$\J$ be a homogeneous ideal  of $k[x_0, \dots , x_m]$ and $\X = V(\J)$ the corresponding projective algebraic variety in $\PP^m(k)$. Then
$$
|\X(\Fq)|  =  |\Delta_e(\J_q)| \quad \text{for all }e\gg 0.
$$
\end{theorem}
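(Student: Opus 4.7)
The plan is to establish that $h_{\J_q}(e) = |\X(\Fq)|$ for all $e \gg 0$; combined with Theorem~\ref{thm:MacaulayThm} and the identification $|\Delta_e(\J_q)| = h_{\LT(\J_q)}(e)$ (obtained by noting that the monomials not in $\LT(\J_q)$ form a $k$-basis of the quotient in each degree), this yields the desired equality. Note that Theorem~\ref{thm:PFB} already furnishes $|\X(\Fq)| = |V(\J_q)| \le |\Delta_e(\J_q)|$ for $e \gg 0$, so the real content is the reverse inequality $h_{\J_q}(e) \le |\X(\Fq)|$ in high degree. What makes this nontrivial is Example~\ref{ex:JqNotRad}: the equality case of Theorem~\ref{thm:PFB} is unavailable, since $\J_q$ need not be radical even if $\J$ is.

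The key is to compute $h_{\J_q}(e)$ exactly via the decomposition $h_{\J_q}(e) = h_{\Gs(k)}(e) - \dim_k(\J_q/\Gs(k))_e$, which is valid because $\Gs(k) \subseteq \J_q$. For the first term, the results of Section~\ref{subsec:2.1} together with Corollary~\ref{cor:generatorsLT} imply that the projectively reduced monomials $\Monb_e$ form a basis of $k[x_0,\dots,x_m]_e/\Gs(k)_e$, and so $h_{\Gs(k)}(e) = |\Monb_e| = \p_m$ whenever $e \ge m(q-1)+1$. Equivalently, after fixing affine representatives $\tilde P \in \Fq^{m+1}\setminus\{0\}$ of the points $P \in \PP^m(\Fq)$, the evaluation map
$$
\mathrm{ev}_e \colon k[x_0,\dots,x_m]_e \longrightarrow \bigoplus_{P \in \PP^m(\Fq)} k, \qquad F \longmapsto (F(\tilde P))_P
$$
has kernel $\Gs(k)_e$ by Corollary~\ref{cor:Fermatgenerate}, and a dimension count shows that it descends to a $k$-linear isomorphism $k[x_0,\dots,x_m]_e/\Gs(k)_e \xrightarrow{\sim} \bigoplus_{P \in \PP^m(\Fq)} k$ for $e \gg 0$.

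For the second term, set $Y := \PP^m(\Fq) \setminus \X(\Fq)$. Since $\J \subseteq I(\X)$, the image $\mathrm{ev}_e(\J_e)$ is contained in $\bigoplus_{Q \in Y} k$. The crux is to show that equality holds in every sufficiently large degree. Fix $P \in Y$; since $P \notin V(\J)$, there is a homogeneous $F_P \in \J$ with $F_P(\tilde P) \ne 0$. For each $Q \in Y \setminus \{P\}$ pick a linear form $L_{PQ}$ vanishing at $\tilde Q$ but not at $\tilde P$ (possible because $\tilde P$ and $\tilde Q$ are linearly independent), and pick a coordinate $x_{i(P)}$ with $\tilde P_{i(P)} \ne 0$. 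Then for all $e$ large enough,
$$
G_{P,e} := x_{i(P)}^{\,e - \deg F_P - (|Y|-1)} \, F_P \prod_{Q \in Y \setminus \{P\}} L_{PQ}
$$
is a homogeneous element of $\J_e$ that is nonzero at $\tilde P$ and vanishes at $\tilde Q$ for every $Q \in Y \setminus \{P\}$. Letting $P$ vary over $Y$ produces elements whose $\mathrm{ev}_e$-images form a basis of $\bigoplus_{Q \in Y} k$. Hence $\dim_k(\J_q/\Gs(k))_e = |Y| = \p_m - |\X(\Fq)|$ for $e \gg 0$, and therefore $h_{\J_q}(e) = |\X(\Fq)|$, as required.

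The main obstacle is the interpolation/separation step producing the $G_{P,e}$: once one certifies that such elements exist in every sufficiently large degree, the argument collapses into the clean dimension count above. The same construction also handles the degenerate case $\X(\Fq) = \emptyset$ uniformly (then $Y = \PP^m(\Fq)$, and the separating polynomials force $\mathrm{ev}_e(\J_e) = \bigoplus_{P \in \PP^m(\Fq)} k$, so $(\J_q)_e = k[x_0,\dots,x_m]_e$ in high degree and both sides of the claimed formula equal zero).
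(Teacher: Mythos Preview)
Your argument is correct and takes a genuinely different route from the paper. The paper shows $\chi_{\J_q} = \chi_{\sqrt{\J_q}}$ by an algebraic squeeze: choosing a Gr\"obner basis $\{G_1,\dots,G_r\}$ of $\I = \sqrt{\J_q}$ and $s$ large enough that each $G_i$ has coefficients in $\FF_{q^s}$ and $G_i^{q^s}\in\J_q$, it uses the Frobenius identity $G_i^{q^s}(x)=G_i(x^{q^s})$ to prove that $x_j\big(G_i^{q^s}-x_j^{d_i(q^s-1)}G_i\big)$ vanishes on $\PP^m(\Fq)$, hence lies in $\Gs(k)$, which forces $x_j^{d}G_i\in\J_q$ for a uniform $d$; this sandwiches $\LT(\J_q)$ between $\LT(\I)$ and a monomial ideal $\M_q=\langle x_j^d\lm(G_i)\rangle$ that coincides with $\LT(\I)$ in high degree. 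You instead compute $h_{\J_q}(e)$ directly via the evaluation isomorphism $k[x_0,\dots,x_m]_e/\Gs(k)_e \xrightarrow{\sim} k^{\p_m}$, identifying $(\J_q/\Gs(k))_e$ with the image of $\J_e$, and then use an explicit interpolation (the polynomials $G_{P,e}$) to see this image is exactly $\bigoplus_{Q\in Y}k$. Your approach is more elementary and geometric: it needs only that $\Gs(k)$ is the vanishing ideal of $\PP^m(\Fq)$ and a standard separation-of-points construction, avoiding both Gr\"obner bases of $\I$ and the Frobenius trick. The paper's approach, on the other hand, yields explicit inclusions among leading term ideals and a concrete threshold $e\ge d(m+1)+\max d_i$ beyond which the equality holds, information that may be useful for effective versions of the formula.
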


\begin{proof}
We have noted that $\X(\Fq) = \X_q = V(\J_q)$. 
Let $\I:= I(\X_q) = \sqrt{\J_q}$. In view of Remark~\ref{rem:PFB}, both $\chi_{\J_q}$ and $\chi_{\I}$ are constant polynomials and 
$$
\chi_{\I} = |\X(\Fq)|    \quad \text{and} \quad  \chi_{\J_q} =  |\Delta_e(\J_q)| \quad \text{for all }e\gg 0. 
$$
Thus 
it suffices to show that 
$ \chi_{\I} =  \chi_{\J_q}$. To this end, let $\{G_1, \dots , G_r\}$ be a Gr\"obner basis of $\I$ such that each $G_i$ is a homogeneous polynomial in 
$k[x_0, \dots , x_m]$. 
Let $d_i:= \deg G_i$ for $i=1, \dots , r$. Choose a large enough positive integer $s$ such that 
$$
G_i \text{ has all its coefficients in } \FF_{q^s}   \quad \text{and} \quad G_i^{q^s} \in \J_q 
 \quad \text{for each } i=1, \dots , r.
$$ 
Then $G_i^{q^s} (x_0, \dots , x_m) = G_i\big(x_0^{q^s}, \dots , x_m^{q^s}\big)$ for $1\le i \le r$. Hence if 
we let 
$$
H_{ij} := x_j \big( G_i^{q^s} - x_j^{d_i(q^s-1)}G_i \big)  \quad \text{for  } i=1, \dots , r 
\text{ and } j=0,1, \dots , m,
$$ 
then we see that $H_{ij} (P) = 0$ for all $P\in \PP^m(\Fq)$. Indeed, if $P=[a_0: \cdots : a_m]$, 
where $a_0, \dots , a_m \in \Fq$, then clearly $H_{ij} (P) = 0$ when $a_j=0$, whereas in case $a_j=1$, 
$H_{ij}(P) = G_i^{q^s}(a_0, \dots , a_m)- G_i(a_0, \dots , a_m) = G_i(a_0^{q^s}, \dots , a_m^{q^s}) - G_i(a_0, \dots , a_m) =0$ as well. 
Thus by Corollary~\ref{cor:Fermatgenerate}, 
$$
H_{ij} \in \Gs(k)  \quad \text{and so } \quad x_j^{d_i(q^s-1)+1}G_i \in \J_q  \quad \text{for  } i=1, \dots , r 
\text{ and } j=0,1, \dots , m,
$$ 
where the last assertion follows since $\Gs(k) \subseteq \J_q$ and $G_i^{q^s} \in \J_q$ for $1\le i\le r$. 
It follows that $x_j^dG_i\in \J_q$ and $x_j^d\lm(G_i)\in \LT(\J_q)$ for   $1\le i \le r$ and $0\le j\le m$, where 
$$
d:= \max\{d_i(q^s-1)+1 : i=1, \dots , r\}.
$$
Hence 
the monomial ideal 
$\M_q:=\langle x_j^d \lm(G_i): 1\le i \le r, \ 0\le j\le m \rangle$ 
satisfies
\begin{equation}
\label{eq:MqinI}
\M_q \subseteq \LT(\J_q) \subseteq \LT(\I) = \langle \lm(G_i) : 1\le i \le r\rangle,
\end{equation}
where the last equality holds since $\{G_1, \dots , G_r\}$ is a Gr\"obner basis of $\I$. 
Now suppose $e \ge d(m+1) + \max\{d_1, \dots , d_r\}$. We claim that $\LT(\I)_e \subseteq (\M_q)_e$. To see this, let $\mu \in \Mon_e$ be such that $\mu \in LT(\I) = \langle \lm(G_i) : 1\le i \le r\rangle$. Then $\mu = \nu \lm(G_i)$ for some $i\in \{1, \dots , r\}$ and $\nu \in \Mon_{e - d_i}$. Now $e-d_i \ge d(m+1)$, by our choice of $e$, and so $x_j^d \mid \nu$ for some $j\in \{0,1, \dots m\}$. Thus 
$\nu = x_j^d \nu'$ for some $\nu'\in \Mon$, and 
hence  $\mu = \nu' x_j^d \lm(G_i)$. This shows that $\mu \in \M_q$. So the claim is proved. Consequently, 
$h_{\M_q}(e) \le h_{\LT(\I)}(e)$ for all $e \gg 0$. 
This implies that $\chi_{\M_q}$ is a constant polynomial (because $\chi_{\I}$ is a constant polynomial and by Theorem~\ref{thm:MacaulayThm}, $\chi_{\I} =\chi_{\LT(\I)}$), and moreover, $\chi_{\M_q} \le \chi_{\LT(\I)}$. 
On the other hand, by \eqref{eq:IJ} and \eqref{eq:MqinI},  $\chi_{\LT(\I)} \le \chi_{\LT(\J_q)}\le \chi_{\M_q}$. Thus, we obtain $\chi_{\LT(\I)} = \chi_{\LT(\J_q)}$ and so by Theorem~\ref{thm:MacaulayThm}, $\chi_{\I} =\chi_{\J_q}$, as desired. 
\end{proof}

Although very simple, it may be instructive to work out the Hilbert functions in Example~\ref{ex:JqNotRad} so as to illustrate Theorem~\ref{thm:PFF}. 

\begin{example}
\label{ex:JqNotRad2}
{\rm 
Suppose $m=1$ and $F(x_0, x_1)$ as well as $\J$ and $\J_q$ are as in Example~\ref{ex:JqNotRad}. Let $\X := V(\J)$. Evidently, $\X(\Fq) = \{[0:1]\}$ and so $|\X(\Fq)|=1$. Also, $\I:= \sqrt{\J_q} = \langle x_0\rangle$ and thus $\{x_1^e + \I\}$ is a basis of $\left(k[x_0,x_1]/\I\right)_e$; in particular,  $h_{\I}(e) =1$ for all $e\in \N$. On the other hand, 
$$
\J_q = \langle F_1, F_2\rangle \quad \text{where} \quad F_1:= x_0^q x_1 - x_0x_1^q \quad \text{and} \quad F_2:= x_0^{q+1}.
$$
A Gr\"obner basis for $\J_q$ is given by $\{F_1, F_2, F_3, F_4\}$, where $F_1, F_2$ are as above, while $F_3:= x_0^2x_1^q$ and $F_4:= x_0x_1^{2q-1}$. Indeed, 
$F_3$ and $F_4$ are in $\J_q$, and since $F_2, F_3, F_4$ are monomials, the $S$-polynomial $S(F_i, F_j)$ is $0$ for $2\le i< j \le 4$, whereas
$$
S(F_1, F_2) = -F_3, \quad S(F_1, F_3) = -F_4, \quad \text{and} \quad S(F_1, F_4) = - x_0x_1^{3q-2} \in \langle F_4\rangle. 
$$
The Hilbert function of $\J_q$ or equivalently, the cardinality of the footprint of $\J_q$ in each degree, is not difficult to determine by a direct computation, and is given by
$$
h_{\J_q}(e) = \begin{cases} e+1 & \text{if } 0\le e \le q, \\
2q+1 -e & \text{if } q+1 \le e \le 2q-1, \\
1 & \text{if }  e \ge 2q. \end{cases}
$$
So the Hilbert function stabilizes after $e=2q$ 
and $\chi_{\J_q}=1 =|\Delta_e(\J_q)|$ for $e\ge 2q$. 
}
\end{example}

\subsection{Projective $\Fq$-Footprint Bound}
We shall now derive a version of Theorems \ref{thm:PFB} and \ref{thm:PFF} that is useful for estimating the number of $\Fq$-rational points of projective algebraic varieties over an algebraic extension $k$ of $\Fq$. 
We first note 
some elementary properties of the fixed monomial order $\preccurlyeq$ on $\Mon$ and  projective reduction. 

\begin{lemma}
\label{lem:lmFbar}
Let $\preccurlyeq$ be a  monomial order on $\Mon$   for which $x_0\succ \cdots \succ x_m$. Then:
\begin{enumerate}
\item[{\rm (i)}] $\overline{\nu} \preccurlyeq \nu$ for all $\nu \in \Mon$. 
\item[{\rm (ii)}] If $F\in k[x_0, \dots , x_m]$ is 
such that $F\ne 0$ and 
$\lm (F)$ is reduced, then $\overline{F} \ne 0$ and $\lm(F) = \lm (\overline{F})$. 
\end{enumerate}
\end{lemma}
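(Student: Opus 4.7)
The plan is to prove (i) first by tracking what happens to the monomial order in a single step of the division algorithm $\nu \to_\Phi \overline{\nu}$ established in Proposition~\ref{prop:diveffect}, and then use (i) as a lemma to deduce (ii) via a leading-term comparison.

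For (i), I would start from the observation that $\nu \to_\Phi \overline{\nu}$, so it suffices to show that every single step of the division algorithm weakly decreases the monomial in the order $\preccurlyeq$. A reduction step (as written out in \eqref{eq:stepindivision}) replaces a monomial $\mu$ by $\mu'$, where the effect on $\mu$ is precisely to swap the factor $x_i^q x_j$ for $x_i x_j^q$ (with $i < j$). Since $\preccurlyeq$ satisfies $x_0 \succ \cdots \succ x_m$, I have $x_i \succ x_j$, hence $x_i^{q-1} \succ x_j^{q-1}$, and multiplying by the common factor $x_i x_j$ gives $x_i^q x_j \succ x_i x_j^q$. By compatibility of $\preccurlyeq$ with multiplication, $\mu \succ \mu'$. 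Iterating (induction on $\wt$ as in the proof of Proposition~\ref{prop:diveffect}) yields $\overline{\nu} \preccurlyeq \nu$; equality occurs precisely when $\nu$ is already projectively reduced.

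For (ii), write $F = c_1 \mu_1 + c_2 \mu_2 + \cdots + c_N \mu_N$ with distinct monomials ordered as $\mu_1 \succ \mu_2 \succ \cdots \succ \mu_N$ and nonzero scalars $c_i \in k$, so that $\lm(F) = \mu_1$. By hypothesis $\mu_1$ is projectively reduced, i.e., $\overline{\mu}_1 = \mu_1$. For each $i \ge 2$, $\mu_i \prec \mu_1$, and by part (i), $\overline{\mu}_i \preccurlyeq \mu_i \prec \mu_1$. Hence in the expression $\overline{F} = c_1 \mu_1 + \sum_{i \ge 2} c_i \overline{\mu}_i$, the monomial $\mu_1$ cannot appear among the $\overline{\mu}_i$ with $i \ge 2$, so after collecting terms the coefficient of $\mu_1$ in $\overline{F}$ is exactly $c_1 \ne 0$ and every other monomial occurring in $\overline{F}$ is strictly $\prec \mu_1$. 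This gives $\overline{F} \ne 0$ and $\lm(\overline{F}) = \mu_1 = \lm(F)$.

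No step here is a serious obstacle; the only subtlety is the inequality $x_i^q x_j \succ x_i x_j^q$ used in (i), which must be derived cleanly from the hypothesis $x_i \succ x_j$ and the axioms of a monomial order rather than taken as automatic. Once this is in place, (ii) follows by a short bookkeeping argument, as sketched above.
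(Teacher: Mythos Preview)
Your proof is correct, and part (ii) is essentially identical to the paper's argument. For part (i) you take a slightly different route: the paper works directly from Definition~\ref{def:monred}, observing that for each $j<\ell$ one has $x_j^{a_j}\succcurlyeq x_j^{\overline{a}_j}x_\ell^{a_j-\overline{a}_j}$ (since $x_j\succ x_\ell$), and then multiplies these inequalities together to compare $\nu$ with $\overline{\nu}$ in one stroke. You instead invoke Proposition~\ref{prop:diveffect} and track a single division step, showing $x_i^qx_j\succ x_ix_j^q$ and iterating. Both arguments reduce to the same basic monomial-order manipulation (moving exponent mass from a $\succ$-larger variable to a $\succ$-smaller one can only decrease the monomial); the paper's version is marginally more direct because it avoids the detour through the division algorithm, while yours has the mild advantage of making the strictness of the inequality (when $\nu\ne\overline{\nu}$) transparent.
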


\begin{proof}
(i) 
The case $\nu =1$ is trivial. Suppose $\nu \in \Mon$ with $\nu \ne 1$. Then 
$\nu = x_0^{a_0} \cdots x_{\ell}^{a_{\ell}}$ for some $a_0, \dots , a_{\ell} \in \N$ with $a_{\ell}>0$. Let $\overline{a}_0, \dots , \overline{a}_{\ell - 1}$ be as in Definition~\ref{def:monred}. Now
$$
x_j \succ x_{\ell} \Longrightarrow x_j^{a_j} \succcurlyeq  x_j^{\overline{a}_j} x_{\ell}^{a_j - \overline{a}_j} \quad 
\text{for } 0\le j < \ell \quad \text{and hence} \quad \overline{\nu} \preccurlyeq \nu.
$$

(ii) Let $\, 0\ne F\in k[x_0, \dots , x_m]$ be 
such that $\lm (F)$ is reduced. 
Let $c_1, \dots , c_N$ be nonzero elements of $k$ and $\nu_1, \dots , \nu_N$ be monomials in $\Mon$ such that 
$$
F = c_1\nu_1 + \cdots + c_N \nu_N \quad 
\text{ and } \quad 
\nu_1 \succ \dots \succ \nu_N.
$$
Then  $\lm (\overline{F}) = \nu_1$ and by hypothesis, $\nu_1 = \overline{\nu}_1$. 
Thus, in view of (i) above, 
$$
\overline{F} = c_1\overline{\nu}_1 + \dots + c_N \overline{\nu}_N  \quad \text{and} \quad
\overline{\nu}_1 = \nu_1 \succ \nu_j \succcurlyeq \overline{\nu}_j \text{ for } 2 \le j \le N.
$$
It follows that $\overline{F} \ne 0$ and  $\lm (\overline{F}) = \overline{\nu}_1 = \nu_1 = \lm(F)$. 
\end{proof}

We now define a variant of the projective footprint that is specific to the base field $\Fq$ and uses projectively reduced monomials. 

\begin{definition} 
\label{def:FqPFP}
Let $k$ be an algebraic extension of $\Fq$ and let $S$ be a subset of $k[x_0, \dots, x_m]$. 
Then for any $e\in \N$, the~set
$$
\overline{\Delta}_e(S):=\{ \mu \in \Monb_e : \lm(\overline{F}) \nmid \mu  \text{ for all homogeneous $F\in S$ with } \overline{F}\ne 0 \} 
$$
is called the \emph{projective $\Fq$-footprint} of $S$ in degree $e$. Moreover, its complement 
$$
\overline{\nabla}_e(S):= 
\{ \mu \in \Monb_e : \lm(\overline{F}) \mid \mu  \text{ for some homogeneous $F\in S$ with } \overline{F}\ne 0 \}
$$
is called the \emph{projective $\Fq$-shadow} of $S$ in degree $e$. If $S=\{F_1, \dots , F_r\}$, we often write $ \overline{\Delta}_e(F_1, \dots , F_r)$ and $ \overline{\nabla}_e(F_1, \dots , F_r)$ 
for 
$\overline{\Delta}_e(S)$ and $ \overline{\nabla}_e(S)$, respectively. Note that if $S$ contains no 
homogeneous polynomial $F$ with $\overline{F}\ne 0$, then $\overline{\Delta}_e(S) = \Monb_e$ 
and 
$\overline{\nabla}_e(S) = \emptyset$.
Also, for any $S'\subseteq k[x_0, \dots, x_m]$,
\begin{equation}
\label{eq:SSprime}
S \subseteq S' \Longrightarrow \overline{\Delta}_e(S) \supseteq \overline{\Delta}_e(S') \text{ and }
\overline{\nabla}_e(S) \subseteq \overline{\nabla}_e(S').
\end{equation}
\end{definition}

A relation between projective footprints and projective $\Fq$-footprints is as follows. 

\begin{lemma}
\label{lem:PFandPFqF}
Let $k$ be an algebraic extension of $\Fq$ and let $\J$ be a homogeneous ideal 
of $k[x_0, \dots, x_m]$.  As before, let $\J_q := \J + \Gs(k)$. Then 
$$
\overline{\Delta}_e(\J) = \Delta_e(\J_q) \quad \text{for all } e\in \N.
$$
\end{lemma}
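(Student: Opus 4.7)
The plan is to prove the two inclusions $\overline{\Delta}_e(\J) \subseteq \Delta_e(\J_q)$ and $\Delta_e(\J_q) \subseteq \overline{\Delta}_e(\J)$ separately, in each case by contrapositive. The main tools will be Corollary~\ref{cor:divonpol} (which identifies $\Gs(k)$ with the kernel of projective reduction) and Lemma~\ref{lem:lmFbar}(ii) (which says that projective reduction preserves leading monomials whenever the latter are already reduced).

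For the inclusion $\overline{\Delta}_e(\J) \subseteq \Delta_e(\J_q)$, I would start with $\mu \in \Monb_e$ and assume $\mu = \lm(G)$ for some nonzero homogeneous $G \in \J_q$. Writing $G = F + H$ with $F \in \J$ and $H \in \Gs(k)$ both homogeneous, Proposition~\ref{prop:basicprojred}(i) and Corollary~\ref{cor:divonpol} yield $\overline{G} = \overline{F} + \overline{H} = \overline{F}$. Since $\lm(G) = \mu$ is projectively reduced by hypothesis, Lemma~\ref{lem:lmFbar}(ii) gives $\overline{G} \ne 0$ with $\lm(\overline{G}) = \mu$. Thus $\overline{F} \ne 0$ and $\lm(\overline{F}) = \mu$, which of course divides $\mu$, contradicting $\mu \in \overline{\Delta}_e(\J)$.

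For the reverse inclusion, I would take $\mu \in \Delta_e(\J_q)$ and first verify $\mu \in \Monb_e$. If $\mu$ were not projectively reduced, then writing $\mu = x_0^{a_0} \cdots x_\ell^{a_\ell}$ with $a_\ell > 0$, some $a_i \ge q$ with $i < \ell$ exists, so that $\lm(\Phi_{i\ell}) = x_i^q x_\ell$ divides $\mu$. Setting $\mu = \nu \cdot x_i^q x_\ell$, we get $\nu \Phi_{i\ell} \in \Gs(k) \subseteq \J_q$ with $\lm(\nu \Phi_{i\ell}) = \mu$, contradicting $\mu \in \Delta_e(\J_q)$. Next, for any homogeneous $F \in \J$ with $\overline{F} \ne 0$, suppose $\lm(\overline{F}) \mid \mu$ and write $\mu = \nu \lm(\overline{F})$ for some $\nu \in \Mon$. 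The key observation is that $\overline{F} = F - (F - \overline{F}) \in \J + \Gs(k) = \J_q$ since $F - \overline{F} \in \Gs(k)$ by Corollary~\ref{cor:divonpol}. Hence $\nu \overline{F}$ is a nonzero homogeneous element of $\J_q$ of degree $e$ with leading monomial $\mu$, which again contradicts $\mu \in \Delta_e(\J_q)$. This shows $\lm(\overline{F}) \nmid \mu$, so $\mu \in \overline{\Delta}_e(\J)$.

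The main subtlety I expect is in the reverse inclusion: one must recognize that although the definition of $\overline{\Delta}_e(\J)$ quantifies over all $F \in \J$, the relevant polynomial to place in $\J_q$ is not $F$ itself (whose leading monomial might lie outside $\Monb$) but its projective reduction $\overline{F}$, and that $\overline{F}$ automatically lands in $\J_q$ thanks to the identification $\Gs(k) = \ker(\,\overline{\phantom{f}}\,)$. Everything else is a straightforward bookkeeping of degrees and divisibilities.
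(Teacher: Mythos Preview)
Your proposal is correct and follows essentially the same route as the paper: both directions are argued by contradiction/contrapositive, using Corollary~\ref{cor:divonpol} to identify $\Gs(k)$ with the kernel of projective reduction, Lemma~\ref{lem:lmFbar}(ii) to control leading monomials under reduction, and the observation $\overline{F}\in\J_q$ for $F\in\J$. The only cosmetic differences are that the paper treats the case $e=0$ separately (your argument covers it implicitly) and that the paper's text contains a harmless typo ($a_j\ge q-1$ where $a_j\ge q$ is meant), which you state correctly.
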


\begin{proof}
Let $e\in \N$ and $\mu \in \overline{\Delta}_e(\J)$. Suppose, if possible, $\mu = \lm(F)$ for some homogeneous $F\in \J_q$. Write $F = G + \gamma$, where $G\in \J$ and $\gamma \in \Gs(k)$. Since $\J$ and $\Gs(k)$  are homogeneous ideals, we may assume without loss of generality that both $G$ and $\gamma$ are homogeneous. By Corollary~\ref{cor:divonpol}, 
$\overline{F} = \overline{G}$. Also since $\mu = \lm(F)$ is reduced, by part (ii) of Lemma~\ref{lem:lmFbar}, $\overline{G}\ne 0$ and $\lm(\overline{G}) = \mu$, which contradicts the assumption that  $\mu \in \overline{\Delta}_e(\J)$. Thus  $\overline{\Delta}_e(\J) \subseteq \Delta_e(\J_q)$. 

To prove the other inclusion, first note that $1\in \J \Leftrightarrow 1 \in \J_q$, and that both 
$\Delta_0(\J_q)$ and $\overline{\Delta}_0(\J)$ are equal to $\{1\}$ or the empty set according as $1\not\in \J$ or $1\in \J$. Thus $\Delta_0(\J_q) = \overline{\Delta}_0(\J)$. Now let $e\in \N$ with $e>0$ and 
$\mu \in \Delta_e(\J_q)$. Then there are  $a_0, \dots , a_{\ell} \in \N$ with $a_{\ell}>0$ such that 
$\mu = x_0^{a_0} \cdots x_{\ell}^{a_{\ell}}$. In case $a_j \ge q-1$ for some $j\in \N$ with $j< \ell$, then 
$$
\lm(\Phi_{j\ell}) = x_j^q x_{\ell} \mid \mu \quad \text{and hence} \quad \mu = \lm(\phi) \text{ for some } \phi\in \J_q.
$$
This shows that $\mu$ is reduced, i.e., $\mu \in \Monb_e$. Further, if $\lm(\overline{F})\mid \mu$ for some homogeneous $F\in \J$ with $\overline{F}\ne 0$, then $\mu = \nu \lm(\overline{F}) = \lm(\nu \overline{F})$ for some $\nu \in \Monb$. But since $F \in \J$ and $F - \overline{F}\in \Gs(k)$, we find $\overline{F}\in \J_q$. Thus we obtain a contradiction to the assumption that  $\mu \in \Delta_e(\J_q)$. 
This proves that $\Delta_e(\J_q) \subseteq \overline{\Delta}_e(\J) $. 
\end{proof}

The result that was 
alluded to at the beginning of this subsection is now an easy consequence of the above lemma and the projective $\Fq$-footprint formula.

\begin{theorem}[Projective $\Fq$-Footprint Bound] 
\label{thm:PFqFB}
Let $k$ be an algebraic closure of $\Fq$ and let $F_1, \dots , F_r$ be any homogeneous polynomials in $k[x_0, \dots, x_m]$. Also, let $\X= V(F_1, \dots , F_r)$ be the corresponding projective variety in  $\PP^m(k)$. 
Then 
\begin{equation}
\label{eq:PFqFB}
|\X(\Fq)| \le \left| \overline{\Delta}_e(F_1, \dots , F_r) \right| \quad \text{for all }e\gg 0.
\end{equation}
\end{theorem}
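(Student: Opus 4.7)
The plan is to reduce the statement to the projective $\Fq$-footprint formula already established in Theorem~\ref{thm:PFF}, combined with the relationship between ordinary projective footprints and projective $\Fq$-footprints from Lemma~\ref{lem:PFandPFqF}. Essentially all the hard work has been done; what remains is to package it correctly.

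First, I would introduce the homogeneous ideal $\J := \langle F_1, \dots, F_r \rangle$ of $k[x_0, \dots, x_m]$. Then $\X = V(\J)$ is the projective variety attached to $\J$, and its set of $\Fq$-rational points coincides with $V(\J_q)$, where $\J_q := \J + \Gs(k)$. Applying Theorem~\ref{thm:PFF} to this $\J$ yields
$$
|\X(\Fq)| = |\Delta_e(\J_q)| \quad \text{for all } e \gg 0.
$$

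Second, I would invoke Lemma~\ref{lem:PFandPFqF}, which translates between the two notions of footprint, to rewrite $\Delta_e(\J_q) = \overline{\Delta}_e(\J)$. Thus $|\X(\Fq)| = |\overline{\Delta}_e(\J)|$ for all $e \gg 0$.

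Finally, since $\{F_1, \dots, F_r\} \subseteq \J$, the monotonicity property \eqref{eq:SSprime} of the projective $\Fq$-footprint gives $\overline{\Delta}_e(\J) \subseteq \overline{\Delta}_e(F_1, \dots, F_r)$, whence $|\overline{\Delta}_e(\J)| \le |\overline{\Delta}_e(F_1, \dots, F_r)|$. Chaining these together produces \eqref{eq:PFqFB}. There is no real obstacle here: the bound is almost a formal corollary, and the only minor point worth remarking on is that passing from the equality $|\X(\Fq)| = |\overline{\Delta}_e(\J)|$ to the bound incurs a potential loss precisely when the generators $F_1, \dots, F_r$ fail to produce all the leading monomials of $\J$ after projective reduction—so the bound will generally be strict unless $\{\overline{F}_1, \dots, \overline{F}_r\}$ is close to a Gröbner basis of $\J_q$ in the relevant degrees.
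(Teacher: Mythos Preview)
Your proposal is correct and follows exactly the same route as the paper's proof: introduce $\J = \langle F_1,\dots,F_r\rangle$, apply Theorem~\ref{thm:PFF} to get $|\X(\Fq)| = |\Delta_e(\J_q)|$, use Lemma~\ref{lem:PFandPFqF} to rewrite this as $|\overline{\Delta}_e(\J)|$, and then invoke the monotonicity \eqref{eq:SSprime} to pass to $|\overline{\Delta}_e(F_1,\dots,F_r)|$. Your closing remark on when the inequality is strict is a helpful addition not present in the paper.
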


\begin{proof}
Let $\J$ be the ideal of $k[x_0, \dots, x_m]$ generated by $F_1, \dots , F_r$, and let $\J_q:= \J +\Gs(k)$. Then $\X= V(\J)$. So by Theorem~\ref{thm:PFF}, Lemma~\ref{lem:PFandPFqF} and 
\eqref{eq:SSprime}, we obtain 
$$
|\X(\Fq)| = |\Delta_e(\J_q)|  = | \overline{\Delta}_e(\J) | \le \left| \overline{\Delta}_e(F_1, \dots , F_r) \right|
\quad \text{for all }e\gg 0,
$$
as desired. 
\end{proof}

\begin{remark}
The efficacy of the bound in \eqref{eq:PFqFB} and the ease of computing the projective $\Fq$-footprint
often depends on the choice of the homogeneous polynomials 
$F_1, \dots , F_r$ whose common zeros determine the $\Fq$-rational points of the projective variety $\X$. 
We remark that it is always possible to choose $F_1, \dots , F_r$ 
such that:
\begin{itemize}
\item
$F_i$ is projectively reduced  for each $i=1, \dots , r$. 
\item 
$\deg F_i \le (m+1) (q-1)$ for all $i=1, \dots , r$. 
\item
$F_1, \dots , F_r$ are linearly independent (over $k$). 
\item
$\lm(F_1), \dots , \lm(F_r)$ are distinct. 
\end{itemize}
To see this, first note that since $F_i - \overline{F}_i \in \Gs(k)$ for each $i=1, \dots , r$, the $\Fq$-rational points of $V(F_1, \dots , F_r)$ and $V( \overline{F}_1, \dots ,  \overline{F}_r)$ coincide. Thus, replacing $F_i$ by $ \overline{F}_i $, we may assume that each   of $F_1, \dots , F_r$ is projectively reduced. Next, if some $F_i$ is homogeneous of degree $\ge (m+1)(q-1) + 1$, then every monomial 
$\mu = x_0^{a_0} \cdots x_{m}^{a_{m}}$ appearing in $F_i$, we must have $a_j \ge q$ for some $j$ (depending on $\mu$) with $0\le j \le m$. Replacing the exponent $a_j$ by $a_j - (q-1)$ will result in a homogeneous polynomial $G_i$ of degree $\deg F_i - q+1$ that has exactly the same $\Fq$-rational zeros as $F_i$. Again if $\deg G_i \ge (m+1)(q-1) + 1$, then this process can be continued. 
Thus, we can assume, without loss of generality, that $\deg F_i \le (m+1) (q-1)$ for $i=1, \dots , r$. 
Further, if $F_1, \dots , F_r$ are linearly dependent, then one of them, say $F_r$ is a $k$-linear combination of others, and hence $V(F_1, \dots , F_r) = V(F_1, \dots , F_{r-1})$. Thus it is clear that we may assume $F_1, \dots , F_r$ to be linearly independent. Finally, if two or more of the polynomials have the same leading monomials, then we can use the division algorithm to replace 
$F_1, \dots , F_r$ by $G_1, \dots , G_r$ obtained by suitable linear combinations of $F_1, \dots , F_r$ such that $V(F_1, \dots , F_r) = V(G_1, \dots , G_r)$ and $\lm(G_1), \dots , \lm(G_r)$ are distinct. 
Note that $G_1, \dots , G_r$ continue to possess the properties previously arranged for $F_1, \dots , F_r$, i.e., they are projectively reduced,  linearly independent homogeneous polynomials of degree $\le (m+1) (q-1)$. 
\end{remark}

\begin{remark}
We remark that the projective $\Fq$-footprint bound for the number of $\Fq$-rational points of  a projective variety $\X = V(F_1, \dots , F_r)$ in $\PP^m(k)$ is usually superior 
to the bound obtained from regarding $\X$ as an affine cone, say $\Zs$,  in $\AA^{m+1}(k)$ and using the affine $\Fq$-footprint bound. For example, if $q=5$, and $F_1, F_2\in \FF_5[x_0, x_1, x_2]$ are homogeneous polynomials of degree $5$ such that $\lm(F_1) = x_0^4 x_1$ and $\lm(F_2) = x_0^2x_1^3$, then for the cone $\Zs= Z(F_1, F_2)$ in $\AA^{3}$, the affine $\FF_5$-footprint bound $|\overline{\Delta}(F_1, F_2)|$ works out to be $85$, which implies that the number of 
$\FF_5$-rational points of the projective variety $\X = V(F_1, F_2)$ in $\PP^2$ is bounded above by $(85-1)/(5-1)$, which is $21$. On the other hand, for $e\gg 0$, the projective footprint bound $|\overline{\Delta}_e(F_1, F_2)|$ is 
$20$. 
The details of calculations are left to the reader. 
\end{remark}

\section{Serre's Inequality}
In this section we will show that Serre's inequality, as stated in equation \eqref{SerreIneq}, is an easy consequence of the projective $\Fq$-footprint bound. To this end, we will first compute the cardinality of projective $\Fq$-shadows of all possible leading monomials of nonzero homogeneous polynomial of degree $d\le q$. Note that since $d \le q$ any such polynomial 
is necessarily projectively reduced. 

\begin{lemma}
\label{lem:valueonemu}
Let $d\in \N$ with $1 \le d \le q$ and let $\nu=x_0^{b_0}\cdots x_{\ell}^{b_{\ell}} \in \Mon_d,$ where $0 \le \ell \le m$ and $b_0, \dots, b_{\ell} \in \N$ with $b_{\ell} >0$. Then 
$$ |\nabla_e^q(\nu)|=(q-b_0) \cdots (q-b_{\ell -1})\big( 1+(q-b_{\ell})\p_{m- \ell - 1}\big) \quad \text{for all } e\gg 0.
$$ 
\end{lemma}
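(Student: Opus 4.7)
The plan is to first reduce the problem to a purely combinatorial count. Because $d\le q$ and $b_\ell>0$, every exponent $b_i$ lies in $\{0,\dots,q-1\}$, so $\nu\in\Monb_d$; hence $\overline{\nu}=\nu$, $\lm(\overline{\nu})=\nu$, and
$$
\overline{\nabla}_e(\nu)=\{\mu\in\Monb_e:\nu\mid \mu\}.
$$
The task is thus to count projectively reduced monomials of degree $e$ that are divisible by $\nu$.

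Next I would invoke the decomposition \eqref{eq:disjunion}, $\Monb_e=\coprod_{j=0}^m \Monb_e^{(j)}$. Since $x_\ell\mid\nu$, only layers with $j\ge \ell$ can contribute. For $e\gg 0$ (concretely, $e\ge m(q-1)+1$), Section~\ref{subsec:2.1} describes each element of $\Monb_e^{(j)}$ uniquely as $x_0^{a_0}\cdots x_{j-1}^{a_{j-1}}x_j^{\,e-a_0-\cdots-a_{j-1}}$ with $0\le a_i\le q-1$ for $i<j$, the top exponent being automatically positive. In the layer $j=\ell$, divisibility by $\nu$ forces $a_i\ge b_i$ for $i<\ell$ and $a_\ell\ge b_\ell$; the latter amounts to $a_0+\cdots+a_{\ell-1}\le e-b_\ell$, which is vacuous once $e\ge m(q-1)+b_\ell$. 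This yields $\prod_{i=0}^{\ell-1}(q-b_i)$ contributions. In a layer $\ell<j\le m$, the constraints are $b_i\le a_i\le q-1$ for $i\le \ell$ (giving $\prod_{i=0}^{\ell-1}(q-b_i)$ choices for $i<\ell$ and $(q-b_\ell)$ choices for $i=\ell$), while each $a_i$ for $\ell<i<j$ is free in $\{0,\dots,q-1\}$ (giving $q^{j-\ell-1}$ choices); the top exponent is again automatically positive for $e\gg 0$. The layer therefore contributes $(q-b_\ell)\,q^{j-\ell-1}\prod_{i=0}^{\ell-1}(q-b_i)$.

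Summing over $j=\ell,\ldots,m$ and factoring the common product gives
$$
|\overline{\nabla}_e(\nu)|=\prod_{i=0}^{\ell-1}(q-b_i)\Bigl[1+(q-b_\ell)\sum_{j=\ell+1}^{m}q^{\,j-\ell-1}\Bigr]=\prod_{i=0}^{\ell-1}(q-b_i)\bigl(1+(q-b_\ell)\p_{m-\ell-1}\bigr),
$$
which is the stated formula; the convention $\p_{-1}=0$ correctly handles the boundary case $\ell=m$. The only mildly delicate step is verifying that ``$e\gg 0$'' is large enough to make both the top-exponent positivity ($a_j>0$) and the top divisibility constraint ($a_\ell\ge b_\ell$ in the $j=\ell$ layer) automatic, which holds as soon as $e\ge m(q-1)+b_\ell$. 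Everything else is a routine geometric sum.
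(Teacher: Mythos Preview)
Your proof is correct and follows essentially the same approach as the paper: both arguments identify $\overline{\nabla}_e(\nu)$ with the set of reduced monomials divisible by $\nu$, split $\Monb_e$ into the layers $\Monb_e^{(j)}$, count the contribution of each layer $j\ge \ell$ for $e\gg 0$, and sum the resulting geometric series. Your explicit threshold $e\ge m(q-1)+b_\ell$ is a nice sharpening of the paper's ``$e\gg 0$''; one tiny imprecision is the claim that every $b_i\le q-1$ (the top exponent $b_\ell$ could equal $q$ when $d=q$), but this does not affect the argument since only $b_0,\dots,b_{\ell-1}\le q-1$ is needed for $\nu\in\Monb_d$, and that does follow from $b_\ell>0$ and $\sum b_i\le q$.
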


\begin{proof}
Note that $\overline{\nu} = \nu$ and thus $\overline{\nabla}_e(\nu) = \{\mu \in  \Monb_e : \nu \mid \mu\}$ for any $e\in \N$. We will use the decomposition \eqref{eq:disjunion}.
First, note that if 
$0\le j < \ell$, then  the variable $x_{\ell}$ does not occur with positive exponent in any of the monomials in $\Monb^{(j)}$. Hence 
$$
\overline{\nabla}_e(\nu) \cap \Monb_e^{(j)} = \emptyset \quad \text{and so} \quad 
|\overline{\nabla}_e(\nu) \cap \Monb_e^{(j)}| = 0, \ \makebox{for $0\le j<\ell$ and any } e\in \N.
$$
Next, we observe that for $e \gg 0$,  a monomial $\mu=x_0^{a_0}\cdots x_{\ell}^{a_{\ell}} \in \Monb_e^ {(\ell)}$
is in  $\overline{\nabla}_e(\nu)$ if and only if $b_i \le a_i \le q-1$ for $i=0, \dots , \ell-1$ because $b_{\ell} \le a_{\ell}$  is automatically satisfied for sufficiently large $e$. Hence
$$
|\overline{\nabla}_e(\nu) \cap \Monb_e^{(\ell)}| = (q-b_0) \cdots (q-b_{\ell -1}) \quad \text{for all } e\gg 0. 
$$
Finally, if $\ell < j \le m$, then $\mu=x_0^{a_0}\cdots x_{j}^{a_{j}} \in \Monb_e^ {(j)}$
is in  $\overline{\nabla}_e(\nu)$ if and only if $b_i \le a_i \le q-1$ for $i=0, \dots , \ell$ and $0\le a_i \le q-1$ for $i=\ell+1, \dots , j-1$. Hence 
$$
|\overline{\nabla}_e(\nu) \cap \Monb_e^{(j)}| = (q-b_0) \cdots (q-b_{\ell })q^{j- \ell -1}, \ \makebox{for $\ell < j \le m$ and any } e\in \N.
$$
Since $\Monb_e$ is the disjoint union of $\Monb_e^{(0)}, \dots \Monb_e^{(m)}$, we conclude that 
$$|\overline{\nabla}_e(\nu)|=(q-b_0) \cdots (q-b_{\ell -1})+(q-b_0) \cdots (q-b_{\ell })(1+q+\cdots+q^{m-\ell-1})$$
and the lemma follows.
\end{proof}
\begin{proposition}[Serre's inequality]
Let $F \in \mathbb{F}_q[x_0,\dots,x_m]$ be a nonzero homogeneous polynomial of degree $d\ \le q$ and let $\X=V(F)$. Then 
$$
|\X (\Fq)| \le dq^{m-1} + \p_{m-2}.
$$
\end{proposition}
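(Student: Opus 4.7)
My plan is to apply the projective $\Fq$-footprint bound (Theorem~\ref{thm:PFqFB}) to $F$. Since $\deg F = d \le q$, every degree-$d$ monomial is projectively reduced, hence $\overline{F} = F$ and $\nu := \lm(F)$ is a reduced monomial $x_0^{b_0}\cdots x_\ell^{b_\ell}$ with $b_\ell \ge 1$ and $\sum b_i = d$. Theorem~\ref{thm:PFqFB} combined with Lemma~\ref{lem:valueonemu} then yields, for $e \gg 0$,
$$|\X(\Fq)| \le \p_m - \prod_{i<\ell}(q-b_i)\bigl(1+(q-b_\ell)\p_{m-\ell-1}\bigr),$$
so Serre's inequality reduces to proving the shadow estimate $|\overline{\nabla}_e(\nu)| \ge \p_m - dq^{m-1} - \p_{m-2} = q^{m-1}(q+1-d)$.

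I would then split into two cases. In the \emph{generic} case, where $\nu$ is not of the form $x_j^d$ for any $j \in \{0,\dots,m-2\}$, the desired lower bound follows directly from the formula of Lemma~\ref{lem:valueonemu}. When $\ell = m$ the factor $\p_{m-\ell-1}$ vanishes and the claim reduces to $\prod_{i<m}(q-b_i) \ge q^{m-1}(q+1-d)$, which is immediate from $\sum_{i<m} b_i \le d-1$ and $b_i \le q-1$; for $\ell < m$ the hypothesis on $\nu$ (together with $b_\ell \ge 1$ and $d \le q$) supplies enough slack to verify the estimate by a routine expansion of $(q-b_\ell)\p_{m-\ell-1} = \p_{m-\ell} - b_\ell \p_{m-\ell-1} - 1$.

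In the \emph{exceptional} case $\nu = x_j^d$ with $j \le m-2$, the single-$F$ bound is insufficient; here I augment by the polynomials $F x_j^{q-d} x_i$ for $i \in \{0,\dots,m\}\setminus\{j\}$, all of which lie in the ideal $(F)$, so $\X$ remains equal to the variety cut out by $F$ together with these augmenting polynomials and Theorem~\ref{thm:PFqFB} applies to the enlarged generating set. A direct computation with Definition~\ref{def:monred} shows $\lm(\overline{F x_j^{q-d} x_i}) = x_i x_j^q$ for $i<j$ and $= x_j x_i^q$ for $i>j$. Applying Lemma~\ref{lem:valueonemu} to each new generator and combining by inclusion--exclusion on the decomposition $\Monb_e = \coprod_{\ell'=0}^m \Monb_e^{(\ell')}$, the combined projective $\Fq$-shadow collapses to exactly $q^m$, using the identity $1+(q-1)\p_{m-j-1} = q^{m-j}$. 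This yields $|\X(\Fq)| \le \p_m - q^m = \p_{m-1} = q^{m-1} + \p_{m-2} \le dq^{m-1} + \p_{m-2}$ (since $d \ge 1$).

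The hardest step will be the algebraic verification of the shadow lower bound in the generic case; it amounts to identifying which reduced monomials of degree $d$ minimize $|\overline{\nabla}_e(\cdot)|$, the minimum $q^{m-1}(q+1-d)$ being achieved, for instance, at $\nu = x_{m-1}^d$ and at $\nu = x_0^{d-1} x_\ell$ for $\ell \ge 1$. Once the new reduced leading monomials are identified, the inclusion--exclusion computation in the exceptional case is routine and telescopic.
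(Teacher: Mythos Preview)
Your overall strategy---apply Theorem~\ref{thm:PFqFB} and show the shadow of $\lm(\overline F)$ is at least $q^{m-1}(q+1-d)$---is the right one, and it is essentially what the paper does. But the way you split into cases has two genuine gaps.

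\medskip
\textbf{The generic/exceptional boundary is off.} You put $\nu=x_{m-1}^d$ into the generic case and even claim it realises the minimum $q^{m-1}(q+1-d)$. But Lemma~\ref{lem:valueonemu} with $\ell=m-1$ and $b_0=\dots=b_{m-2}=0$, $b_{m-1}=d$ gives
\[
|\overline{\nabla}_e(x_{m-1}^d)|=q^{m-1}\bigl(1+(q-d)\p_{-1}\bigr)=q^{m-1},
\]
since $\p_{-1}=0$. For $d<q$ this is strictly smaller than $q^{m-1}(q+1-d)$, so the shadow estimate \emph{fails} there and your generic-case argument does not cover it. The exceptional case must include all pure powers $x_j^d$ with $j\le m-1$, not just $j\le m-2$.

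\medskip
\textbf{The leading-monomial claim in the exceptional case is not justified.} For $i>j$ you assert $\lm\bigl(\overline{F x_j^{q-d}x_i}\bigr)=x_j x_i^q$. But $\lm(F x_j^{q-d}x_i)=x_j^q x_i$ is \emph{not} reduced, so Lemma~\ref{lem:lmFbar}(ii) does not apply, and a non-leading monomial of $F$ can, after multiplication and reduction, overtake $x_j x_i^q$. Concretely, take $q=3$, $m=2$, $d=2$, $j=0$, $i=1$ and $F=x_0^2+x_0x_1$. Then $Fx_0x_1=x_0^3x_1+x_0^2x_1^2$, whose reduction is $x_0x_1^3+x_0^2x_1^2$, and $\lm(\overline{Fx_0x_1})=x_0^2x_1^2\ne x_0x_1^3$. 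Your inclusion--exclusion computation is built on the specific monomials $x_j x_i^q$, so it does not go through as stated.

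\medskip
\textbf{Comparison with the paper.} The paper sidesteps both difficulties by a single preliminary move: if $\X$ is not contained in a hyperplane, choose coordinates so that the $m+1$ standard points $\mathbf e_0,\dots,\mathbf e_m$ lie on $\X$. Then no $x_j^d$ occurs in $F$ at all, so $\lm(F)$ automatically involves at least two variables and the exceptional case never arises. After that, a short telescoping reduction shows $|\overline{\nabla}_e(\nu)|\ge |\overline{\nabla}_e(x_s^{b_s}x_{s+1}^{d-b_s})|$, and the two-variable case is a direct calculation. Your idea of augmenting the generating set instead of changing coordinates is interesting, but to make it work you would need either a more robust control on $\lm(\overline{G})$ for the auxiliary $G$'s, or a different choice of auxiliary polynomials whose reduced leading monomials you can pin down.
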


\begin{proof}
If $\X$ is contained in a hyperplane, then $|\X(\Fq)| \le p_{m-1} \le d q^{m-1} + p_{m-2}$,  which proves Serre's inequality. In particular, this settles the case $d\le 1$. Therefore we now assume that $2 \le d \le q$ and that $\X$ contains $m+1$ points in general position. After a projective linear transformation, we may moreover assume that $\X$ contains the $m+1$ points $\mathbf{e}_j$ ($0\le j\le m$) whose homogeneous coordinates have $1$ in the $j^{\rm th}$ position and $0$ elsewhere. 
This implies that the monomials of the form $x_j^{d}$ ($0\le j\le m$)  do not occur in $F$. Hence 
there are 
$s, \ell, b_s, \dots , b_{\ell}\in \N$ such that 
$$
\nu :=\lm(F) = x_s^{b_s} \cdots x_{\ell}^{b_{\ell}}, \quad 
0\le s< \ell \le m, \ 0 < b_s < d,  \  0 < b_{\ell} < d \text{ and }\sum_{j=s}^\ell b_j = d.
$$
If $\ell > s+1$, then consider 
$\nu_1 := x_s^{b_s} \cdots x_{\ell -2}^{b_{\ell -2}} x_{\ell -1}^{b_{\ell -1} + b_{\ell}}$ 
and note that by Lemma \ref{lem:valueonemu}, 
\begin{align*}
|\overline{\nabla}_e (\nu)|-|\overline{\nabla}_e (\nu_1)| & = N \big( (q-b_{\ell -1})(1 + (q - b_{\ell})\p_{m-\ell -1})-1- (q - b_{\ell -1} - b_{\ell}) \p_{m-\ell}\big) \notag\\
& = N \left( b_{\ell -1}b_{\ell} \p_{m-\ell -1}+b_{\ell}-1\right)\ge 0 \quad \text{for all } e\gg 0,
\end{align*}
where $N:= (q-b_0) \cdots (q - b_{\ell -2})$ and we have used the fact that $\p_{m-\ell} = q\p_{m-\ell -1}+1$. If $\ell > s+2$, then 
we consider $\nu_2 := x_s^{b_s} \cdots x_{\ell -2}^{b_{\ell -2} + b_{\ell -1} + b_{\ell}}$, and 
in a similar way, we obtain 
$|\overline{\nabla}_e (\nu_1)|-|\overline{\nabla}_e (\nu_2)| \ge 0$ for all $e\gg 0$. Continuing in this way, we see that 
\begin{equation}
\label{eq:ssplu}
|\overline{\nabla}_e (\nu)|-|\overline{\nabla}_e (x_s^{b_s}x_{s+1}^{d-b_s})| \ge 0 \quad \text{for all } e\gg 0.
\end{equation}
Moreover, applying Lemma \ref{lem:valueonemu} once again, we obtain 
\begin{equation}
\label{eq:ssnabla}
|\overline{\nabla}_e(x_s^{b_s}x_{s+1}^{d-b_s})| =q^s(q-b_{s})\big( 1+(q-d+b_s)\p_{m-s-2}\big)
\quad \text{for all } e\gg 0.
\end{equation}
Now by the projective $\Fq$-footprint bound (Theorem~\ref{thm:PFqFB}) and the observation in \S\,\ref{subsec:2.1}  that 
$|\Monb_e| = \p_m$ for $e\gg 0$, Serre's inequality \eqref{SerreIneq} is proved if we show that 
\begin{equation}
\label{eq:serreshadow}
|\overline{\nabla}_e (\nu)| \ge \p_m - \left( dq^{m-1} + \p_{m-2}   \right) = (q-d+1)q^{m-1} \quad \text{for all } e\gg 0,
\end{equation}
Thus,  by \eqref{eq:ssplu}, \eqref{eq:ssnabla}, and \eqref{eq:serreshadow}, it suffices to show that the difference
$$
q^s(q-b_{s})\big( 1+(q-d+b_s)\p_{m-s-2}\big)   - (q-d+1)q^{m-1}
$$
is nonnegative. But an easy calculation shows that this difference is equal to 
$$
q^{m-s-1} (a_s -1) (d - a_s -1) + (q - a_s)(d - a_s -1),
$$
which is clearly nonnegative, since $0< a_s < d \le q$. 
\end{proof}

Since example of homogeneous polynomials of degree $d\le q$ attaining the Serre bound \eqref{SerreIneq} are easy to construct (e.g., 
$F(x_0, \dots , x_m):= (x_1 - a_1x_0) \cdots (x_1 - a_dx_0)$, where $a_1, \dots, a_d$ are distinct elements of $\Fq$), the above proposition shows that $e_1(d,m) = dq^{m-1} + \p_{m-2}$ for $d\le q$. It would be interesting if one can also determine $e_r(d,m)$ explicitly for every $r > 1$ and $d\le q$. While this is open, in general, we refer to \cite{BDG2018} and the references therein for some recent results in this direction. 

\section{Acknowledgements}

The authors would like to gratefully acknowledge the following foundations and institutions:
for received support.
Peter Beelen was supported by The Danish Council for Independent Research (Grant No. DFF--4002-00367). Mrinmoy Datta is supported by The Danish Council for Independent Research (Grant No. DFF--6108-00362). Sudhir Ghorpade is partially supported by IRCC Award grant 12IRAWD009
from IIT Bombay. 
Also, 
Peter Beelen would like to thank IIT Bombay where large parts of this work were carried out when he was there in January 2017 as a Visiting Professor. 
Sudhir Ghorpade would like to thank the Technical University of Denmark for a visit of 11 days in June 2017 when some of this work was done. 

\end{document}